\newtheorem{theorem}{Theorem}[section]
\newtheorem{lemma}[theorem]{Lemma}
\newtheorem{proposition}[theorem]{Proposition}
\newtheorem{corollary}[theorem]{Corollary}
\newtheorem{example}[theorem]{Example}
\begin{document}

\numberwithin{equation}{section}

%\title{Obstructions to the existence of complete warped product Ricci solitons}
%\title{Rigidity of complete warped product Ricci solitons}
\title{On noncompact warped product Ricci solitons}

\author{Valter Borges%\footnote{Faculdade de Matem\'{a}tica, Universidade Federal do Par\'{a}\ 66075-110 Be\-l\'{e}m, Par\'{a}, Brazil.}\\ valterborges@ufpa.br
}

%\date{}

\maketitle{}

\begin{abstract}
	The goal of this article is to investigate complete noncompact warped product gradient Ricci solitons. Nonexistence results, estimates for the warping function and for its gradient are proven. When the soliton is steady or expanding these nonexistence results generalize to a broader context certain pde estimates and rigidity obtained when studying warped product Einstein manifolds. When the soliton is shrinking, it is presented a nonexistence theorem with no counterpart in the Einstein case, which is proved using properties of the first eigenvalue of a weighted Laplacian.
\end{abstract}
%\begin{abstract}
%	In this paper, similarly to the work that has been developed for noncompact $m$-quasi-Einstein manifolds, we investigate warped product Ricci solitons whose base is complete and noncompact. We provide nonexistence results and estimates in different settings, according to the signs of $\lambda$ and $\mu$. In particular, we investigate complete warped product shrinking Ricci solitons which have complete noncompact base, and using the Maximum Principle, we generalize part of Theorem \ref{compactbasis}. More precisely, we show that $\lambda>0$ implies $\mu>0$ in the complete noncompact framework as well. See below our main theorem.
%\end{abstract}

\vspace{0.2cm} 
\noindent \emph{2010 Mathematics Subject Classification} :  
 53C20, % Global Riem geom
 53C21, % Methods of Riem. geom. including PDE methods
 53C25, % Einstein manifolds
 53C44, % geom evolution equations (mean curv. flow, Ricci flow) 
 58J60 \\% Relations of PDEs with special manifold structures (Riemannian, Finsler, etc.)\\
\noindent \emph{Keywords}: Ricci Solitons, Einstein Manifolds, Warped Products, Rigidity, Measure Metric Spaces.

 \section{Introduction and Main Results}
 
 A {\it Ricci soliton} $(N,g_{N},X,\lambda)$ is a Riemannian manifold $(N,g_{N})$ with a vector field $X$ and a constant $\lambda\in\mathbb{R}$ satisfying the equation
 \begin{eqnarray*}
 	\displaystyle Ric_{N}+\frac{1}{2}\mathfrak{L}_{X}g_{N}=\lambda g_{N}.
 \end{eqnarray*}
If $\lambda$ is positive, zero or negative, the soliton is called \textit{shrinking}, \textit{steady} or \textit{expanding}, respectively. If the vector field $X$ is the gradient of some function $f:N\rightarrow\mathbb{R}$, then the soliton is called a {\it gradient Ricci soliton}. In this case, it is denoted by  $(N,g_{N},f,\lambda)$ and the defining equation becomes
 \begin{eqnarray}\label{eqriccisoliton}
 Ric_{N}+\nabla_{N}\nabla_{N} f=\lambda g_{N},
 \end{eqnarray}
where $\nabla_{N}\nabla_{N} f$ is the Hessian of $f$ with respect to the metric $g_{N}$. The function $f$ is called the \textit{potential function}.

In this paper we investigate Ricci solitons $N^{n+m}$ which globally have the geometry of a complete warped product $M^n\times_{h}F^m$. Here, $h:M\rightarrow\mathbb{R}$ is an everywhere positive smooth function. When $h$ is not constant and $M$ is complete, it was shown in \cite[Corollary 2.2]{bo-te} that $f$ is lifted from $M$. This reduces the function theoretic analysis of the Ricci soliton equation on complete warped products only to the base $M$. In this case, $(\ref{eqriccisoliton})$ is equivalent to
\begin{equation}\label{system_withoutdependence}
	\begin{array}[pos]{lll}
		Ric+\nabla\nabla f-mh^{-1}\nabla\nabla h=\lambda g,\\\noalign{\smallskip}
		\lambda h^{2}=h(\nabla h)f-(m-1)|\nabla h|^{2}-h\Delta h+\mu,\\\noalign{\smallskip}
		Ric_{F}=\mu g_{F},
	\end{array}
\end{equation}
where all functions and tensors in the first two equations are lifted from $M$. This equivalence was originally obtained in \cite[Theorem 3]{fefrego}, under the additional assumption that $f$ is lifted from the base.

Warped product gradient Ricci solitons arise naturally in classification problems, such as when assuming that the soliton has harmonic Weyl tensor \cite{kim,li}. Furthermore, there are important examples of Ricci solitons, such as the Bryant soliton \cite{bryant}, constructed on warped products. Ricci solitons on manifolds with this type of geometry were investigated in \cite{bo-te,fefrego,gomari,lemespina}.

When $f$ is constant in $(\ref{eqriccisoliton})$, the corresponding equation describes Einstein manifolds, once
\begin{eqnarray}\label{eqeinstein}
	Ric_{N}=\lambda g_{N}.
\end{eqnarray}
Einstein manifolds on warped products $N=M\times_{h}F$ have been wildly studied in the last years \cite{barros,case,case2,kimkim,qian,rimoldi,wang,wang2}. Some motivation and examples can be found in \cite{besse}. 
When $N=M\times_{h}F$ and $v=-m\ln{h}$, it was shown in \cite{kimkim} that $(\ref{eqeinstein})$ (or $(\ref{system_withoutdependence})$ for $f$ constant) is equivalent to $F^m$ being Einstein and
\begin{equation}\label{mquasei}
		Ric+\nabla\nabla v=\lambda g+\frac{1}{m}d v\otimes dv.
\end{equation}
More precisely, starting from $(\ref{mquasei})$, they deduced the existence of $\mu\in\mathbb{R}$ so that $\Delta v-|\nabla v|^2=m\lambda-m\mu e^{\frac{2}{m}v}$ and thus, considered an Einstein manifold $F$ so that $Ric_{F}=\mu g_{F}$, from which they were able to build the Einstein warped product $M\times_{h}F$. Given $m\in(0,+\infty]$, $(M^n,g,v,\lambda)$ is called an {\it $m$-quasi-Einstein manifold} if equation $(\ref{mquasei})$ is satisfied. Observe that $m=+\infty$ drives $(\ref{mquasei})$ back to the gradient Ricci soliton equation with potential function $v$. Below we describe some results about these metrics that motivates the present work.

In \cite[Theorem 1]{kimkim}, Kim and Kim proved that a compact  $m$-quasi-Einstein manifold with $\lambda>0$ and $m\in(0,+\infty)$ must also have $\mu>0$. On the other hand, Qian \cite[Theorem 5]{qian} proved that $M$ is compact, if $\lambda>0$ and $m\in(0,+\infty)$. Recently, Wang \cite[Theorem 2.3]{wang} proved that if $M$ is compact and $v$ is not constant, then $\lambda>0$ and $\mu>0$. When $M$ is noncompact, there has also been some development. Notice that by Qian's result mentioned above we must have $\lambda\leq0$ in this case. When $\lambda=0$, Case \cite[Theorem 1.2]{case} and Wang \cite[Theorem 3.3]{wang} showed independently that one necessarily has $\mu>0$, unless $v$ is constant. When $\lambda<0$, some kind of boundedness is required to obtain nonexistence results. In \cite[Theorem 3.6]{wang} it was proved that $v$ is constant, provided $\sup_{M}|\nabla v|^2<-m^2\lambda/(n+m)$. Barros, Batista and Ribeiro Jr, on the other hand, proved in \cite[Theorem 5]{barros} that if $\mu<0$, then $v$ is constant, provided $v\leq2\mu/\lambda$ all over $M$. Aside from rigidity results, there are also some important estimates that we want to mention. Still under the assumption that $\lambda<0$, when $\mu\leq0$, Wang proved in \cite[Theorem 3.2]{wang} that $|\nabla v|^2\leq-m\lambda$. If in addition $\mu<0$, he proved in \cite[Theorem 5.3]{wang2} that $v\geq\m\ln\sqrt{\lambda/\mu}$. There are also interesting estimates on the volume growth of geodesic balls and on the scalar curvature that we do not mention. The interested reader may consult \cite{barros,case2,wang,wang2}.

%Among the results concerning complete warped product gradient Ricci solitons, we describe those related to our work. When the base is compact and $m\geq2$, Feitosa, Freitas and Gomes  proved in \cite{fefrego} that $\lambda>0$ and $\mu>0$ when $h$ is not constant. This generalizes \cite[Theorem 1]{kimkim} and \cite[Theorem 2.3]{wang}. When $\lambda\leq0$ and $m\geq2$, they also proved rigidity when $M$ is noncompact, with the additional assumption that both the maximum and the minimum of the warping function are attained \cite[Theorem 1]{fefrego}. It is worthy noting that when $\lambda\neq0$ and $m=1$, a solution of the first two equations of $(\ref{system_withoutdependence})$ do not give rise to a warped product Ricci soliton (see \cite[Remark 2.3]{case2}). With this remark their result reads as.
In what follows we describe some results concerning complete warped product gradient Ricci solitons, related to our work. When the base is compact and $m\geq2$, Feitosa, Freitas and Gomes \cite{fefrego} generalized \cite[Theorem 1]{kimkim} and \cite[Theorem 2.3]{wang} showing that $\lambda>0$ and $\mu>0$, if $h$ is not constant. When $M$ is noncompact, $\lambda\leq0$ and $m\geq2$, they also proved rigidity, under the additional assumption that both the maximum and the minimum of the warping function are attained \cite[Theorem 1]{fefrego}. It is worthy noting that when $\lambda\neq0$ and $m=1$, a solution of the first two equations of $(\ref{system_withoutdependence})$ do not give rise to a warped product Ricci soliton (see \cite[Remark 2.3]{case2}). Taking into account this remark, their result reads as.
\begin{theorem}[\cite{fefrego}]\label{compactbasis}
	Let $M^n\times_{h}F^m$ be a gradient Ricci soliton and $R_{F}$ the scalar curvature of $F^m$. Suppose that $M^n$ is compact, fix $q_{0}\in F$ and let $\mu$ be defined by $m\mu=R_{F}(q_{0})$. The following are true:
	\begin{enumerate}
		\item $m\geq2$, $\lambda>0$, $\mu>0$ and $F$ is Einstein.
		\item\label{2estimate} Suppose $\lambda>0$ and $\mu>0$. If either $\displaystyle h\leq\sqrt{\frac{\mu}{\lambda}}$ or $\displaystyle h\geq\sqrt{\frac{\mu}{\lambda}}$ on $M$, then $h$ is constant.
	\end{enumerate}
\end{theorem}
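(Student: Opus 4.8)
The plan is to distill both statements into one scalar equation for $h^{2}$ and then apply the divergence theorem and the maximum principle on the closed base $M$. First I would set up notation: since $h$ is nonconstant (this is implicit already in the reduction to $(\ref{system_withoutdependence})$; if $h$ were constant the warped product would be a Riemannian product and item 1 would require a separate, elementary discussion), \cite[Corollary 2.2]{bo-te} gives that $f$ is lifted from $M$, so by \cite[Theorem 3]{fefrego} the three equations of $(\ref{system_withoutdependence})$ hold on $M$; in particular $Ric_{F}=\mu g_{F}$ — the Einstein claim of item 1 — and, tracing, $R_{F}=m\mu$, which identifies $\mu$ with the constant in the statement. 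The crucial device is the drift function $F:=f-(m-2)\ln h$, its weighted Laplacian $\mathcal{L}\phi:=\Delta\phi-\langle\nabla F,\nabla\phi\rangle$, and the positive weight $e^{-F}=h^{m-2}e^{-f}$; one has $\mathcal{L}\phi\,e^{-F}=\operatorname{div}\!\big(e^{-F}\nabla\phi\big)$, hence $\int_{M}\mathcal{L}\phi\,e^{-F}\,dM=0$ because $M$ is closed. The key identity is
\begin{equation*}
\mathcal{L}(h^{2})=2\mu-2\lambda h^{2}.
\end{equation*}
To obtain it, expand $\mathcal{L}(h^{2})=2h\Delta h+2|\nabla h|^{2}-2h\langle\nabla f,\nabla h\rangle+2(m-2)|\nabla h|^{2}$ and replace $h\Delta h$ by its value from the second equation of $(\ref{system_withoutdependence})$; the $\langle\nabla f,\nabla h\rangle$ terms cancel and the coefficient of $|\nabla h|^{2}$ collapses, $-2(m-1)+2+2(m-2)=0$. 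Choosing precisely the weight $h^{m-2}e^{-f}$ is what produces this cancellation, and this is the one genuinely clever point.

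For item 1, integrating the identity against $e^{-F}\,dM$ gives $\lambda\int_{M}h^{2}e^{-F}\,dM=\mu\int_{M}e^{-F}\,dM$, so $\lambda$ and $\mu$ have the same sign. Compactness of $M$ furnishes points $p_{\max},p_{\min}$ where $h$, hence $h^{2}$, is maximal, resp.\ minimal; there $\nabla(h^{2})=0$, so $\mathcal{L}(h^{2})=\Delta(h^{2})$, which is $\le 0$ at $p_{\max}$ and $\ge 0$ at $p_{\min}$. Plugging these into the identity yields $\lambda h(p_{\min})^{2}\le\mu\le\lambda h(p_{\max})^{2}$, hence $\lambda\big(h(p_{\max})^{2}-h(p_{\min})^{2}\big)\ge 0$; as $h$ is nonconstant this forces $\lambda\ge 0$. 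If $\lambda=0$ then $\mu=0$, the identity reads $\mathcal{L}(h^{2})\equiv 0$, and integrating $h^{2}\mathcal{L}(h^{2})e^{-F}\,dM$ by parts forces $\nabla h\equiv 0$, contradicting nonconstancy; so $\lambda>0$. Then $\mu\ge\lambda h(p_{\min})^{2}>0$ since $h>0$. Finally $m\neq 1$ by \cite[Remark 2.3]{case2} because $\lambda\neq 0$, so $m\ge 2$, completing item 1.

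For item 2 I would assume $\lambda>0$ and $\mu>0$ and write the identity as $\mathcal{L}(h^{2})=-2\lambda\big(h^{2}-\mu/\lambda\big)$. If $h\le\sqrt{\mu/\lambda}$ on $M$ the right side is nonnegative, so $\mathcal{L}(h^{2})\ge 0$ on $M$; since $\int_{M}\mathcal{L}(h^{2})e^{-F}\,dM=0$ and $e^{-F}>0$, this forces $\mathcal{L}(h^{2})\equiv 0$, i.e.\ $h^{2}\equiv\mu/\lambda$, so $h$ is constant. If instead $h\ge\sqrt{\mu/\lambda}$ on $M$, then $\mathcal{L}(h^{2})\le 0$ on $M$ and the same integration again forces $h^{2}\equiv\mu/\lambda$. (Alternatively one could invoke the strong maximum principle after checking, via the extremum points, that $h$ attains the value $\sqrt{\mu/\lambda}$, but the integral argument is shorter.)

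Summarizing the difficulty: once the identity $\mathcal{L}(h^{2})=2\mu-2\lambda h^{2}$ with the weight $h^{m-2}e^{-f}$ is in hand, the rest is routine — item 1 is the maximum principle at the two extrema of $h$ plus one integration by parts, and item 2 is a single application of the divergence theorem. So the main obstacle is recognizing the correct weighted Laplacian that linearizes the second equation of $(\ref{system_withoutdependence})$; without it the equation for $h^{2}$ carries a $|\nabla h|^{2}$ term whose sign fights the maximum principle in the case $h\le\sqrt{\mu/\lambda}$.
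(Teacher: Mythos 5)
Your proof is correct, and it follows the same overall strategy as the paper (reduce to a scalar drift--Laplacian identity on the compact base, integrate against the natural weight, and evaluate at extrema), but it is built on a different key identity. The paper changes variables to $v=-m\ln h$, $\varphi=f-m\ln h$ and works with $-\Delta_{\varphi}v=-m\lambda+m\mu e^{\frac{2}{m}v}$, whose nonlinearity forces a Bochner argument at the maximum of $|\nabla v|^2$ in the case $\lambda=0$ and a strong--maximum--principle argument for item 2; you instead take the weight $h^{m-2}e^{-f}$ and obtain $\mathcal{L}(h^{2})=2\mu-2\lambda h^{2}$, which is \emph{linear} in $h^{2}$. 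The two routes produce literally the same integral identity, namely $\lambda\int_{M}h^{m}e^{-f}\,dM=\mu\int_{M}h^{m-2}e^{-f}\,dM$, which is the paper's $(\ref{intident})$ rewritten, but your linearization lets you dispatch the $\lambda=0$ case by a single integration by parts (no Bochner formula needed) and item 2 by a sign-plus-zero-integral argument rather than the strong maximum principle; it also gives the sign of $\lambda$ in one stroke from the two-sided bound $\lambda h(p_{\min})^{2}\le\mu\le\lambda h(p_{\max})^{2}$. What the paper's choice buys in exchange is that the same pair $(v,\varphi)$ and the Bochner identity $(\ref{gradeq})$ are reused throughout the noncompact theorems, so the compact proof is a warm-up for the rest of the article, whereas your identity is tailored to the compact case. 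Two minor points: you rightly flag that the reduction to $(\ref{system_withoutdependence})$ presupposes $h$ nonconstant (the paper shares this implicit assumption), and your appeal to \cite[Remark 2.3]{case2} to exclude $m=1$ is exactly the paper's argument.
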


The proof of item \ref{2estimate} of the theorem above is implicit in the proof of \cite[Theorem 1]{fefrego}. In Section \ref{prelimi}, we highlight its proof.

A natural question is whether Theorem \ref{compactbasis} holds for complete noncompact warped product gradient Ricci solitons. The answer is partially positive, in the sense that assuming $\lambda>0$ we obtain similar conclusions.

\begin{theorem}\label{noncompacshrinking}
	Let $M^n\times_{h}F^m$ be a complete gradient shrinking Ricci soliton and $R_{F}$ the scalar curvature of $F^m$. Suppose that $M$ is noncompact, fix $q_{0}\in F$ and let $\mu$ be defined by $m\mu=R_{F}(q_{0})$. The following are true:
	\begin{enumerate}
		\item\label{item01} If $h$ is not constant, then $\mu>0$.
		\item\label{item02} If $\displaystyle h\leq\sqrt{\frac{\mu}{\lambda}}$, then $h$ is constant.
	\end{enumerate}
\end{theorem}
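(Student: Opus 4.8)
The plan is to rewrite the second equation of $(\ref{system_withoutdependence})$ as a single scalar equation for $\ln h$ relative to the \emph{weighted} Laplacian naturally attached to the soliton measure. We may assume $h$ is nonconstant, since otherwise both items hold trivially; then $f$ is lifted from $M$ by \cite[Corollary 2.2]{bo-te} and $(\ref{system_withoutdependence})$ holds on $M$. Set $\phi:=f-m\ln h$, so that $e^{-\phi}\,dV=e^{-f}h^{m}\,dV$ is exactly the soliton measure $e^{-f}dV_{N}$ pushed to the base, and let $\Delta_{\phi}:=\Delta-\langle\nabla\phi,\nabla\,\cdot\,\rangle$. Using $\Delta_{\phi}(\ln h)=h^{-1}\big(\Delta h-\langle\nabla h,\nabla f\rangle\big)+(m-1)h^{-2}|\nabla h|^{2}$ together with the second equation of $(\ref{system_withoutdependence})$, a direct computation gives the identity
\begin{equation*}
	\Delta_{\phi}(\ln h)=\frac{\mu}{h^{2}}-\lambda ,
\end{equation*}
which is the warped-product soliton counterpart of the Kim–Kim equation $\Delta v-|\nabla v|^{2}=m\lambda-m\mu e^{\frac{2}{m}v}$ (with $v=-m\ln h$), now with $\Delta$ replaced by $\Delta_{\phi}$.

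The analytic engine is that $(M,g,e^{-\phi}\,dV)$ has finite weighted volume. Since $N$ is a complete gradient shrinking Ricci soliton, it is a standard fact that $\int_{N}e^{-f}\,dV_{N}<\infty$; as $h$ depends only on the base, Fubini's theorem on the warped product gives $\int_{N}e^{-f}\,dV_{N}=\mathrm{vol}(F)\int_{M}e^{-f}h^{m}\,dV=\mathrm{vol}(F)\int_{M}e^{-\phi}\,dV$, and $\mathrm{vol}(F)>0$, so $\int_{M}e^{-\phi}\,dV<\infty$. Consequently $(M,g,e^{-\phi}\,dV)$ is parabolic; equivalently, the bottom eigenvalue of $-\Delta_{\phi}$ on $L^{2}(e^{-\phi})$ is zero and is attained by the constants, and in particular every $\Delta_{\phi}$-subharmonic function on $M$ that is bounded from above must be constant. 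It is through this property that the first eigenvalue of the weighted Laplacian enters. I would also record here the a priori bounds on $h$ and $|\nabla h|$ obtained earlier in the paper, which ensure that $|\nabla\ln h|\,e^{-\phi}$ and $(\Delta_{\phi}\ln h)\,e^{-\phi}$ are integrable, so that the weighted divergence theorem $\int_{M}\Delta_{\phi}(\ln h)\,e^{-\phi}\,dV=0$ is legitimate.

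For item $(\ref{item01})$, suppose $\mu\le 0$. Then the identity forces $\Delta_{\phi}(\ln h)=\mu h^{-2}-\lambda\le-\lambda<0$ everywhere, so integrating against $e^{-\phi}\,dV$ and applying the weighted divergence theorem yields $0=\int_{M}\Delta_{\phi}(\ln h)\,e^{-\phi}\,dV\le-\lambda\int_{M}e^{-\phi}\,dV<0$, a contradiction; hence $\mu>0$. (Alternatively, once $h$ is known to be bounded away from zero, $-\ln h$ is a bounded-above, strictly $\Delta_{\phi}$-subharmonic function, contradicting parabolicity directly.) For item $(\ref{item02})$, assume $h\le\sqrt{\mu/\lambda}$; by item $(\ref{item01})$ we may take $\mu>0$, so the bound is meaningful. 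Then $h^{-2}\ge\lambda/\mu$, hence $\Delta_{\phi}(\ln h)=\mu h^{-2}-\lambda\ge 0$, i.e. $\ln h$ is $\Delta_{\phi}$-subharmonic, while $\ln h\le\tfrac12\ln(\mu/\lambda)$ is bounded above. By parabolicity of the finite-weighted-volume manifold $(M,g,e^{-\phi}\,dV)$, $\ln h$ is constant, and therefore so is $h$.

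The main obstacle is the noncompact analysis behind these two inputs: establishing $\int_{N}e^{-f}\,dV_{N}<\infty$ for the complete shrinking soliton and, via Fubini, the finiteness of the weighted volume of the base, and then drawing out parabolicity and the Liouville property for bounded-above $\Delta_{\phi}$-subharmonic functions. Item $(\ref{item02})$ closes cleanly because its hypothesis $h\le\sqrt{\mu/\lambda}$ hands us precisely the upper bound on $\ln h$ that parabolicity needs; item $(\ref{item01})$ requires in addition the a priori growth control on $h$ and $|\nabla h|$ (or simply a positive lower bound for $h$) to justify the integration by parts, which is exactly why those estimates are developed beforehand.
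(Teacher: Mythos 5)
Your setup is sound: with $\phi=f-m\ln h$ the identity $\Delta_{\phi}(\ln h)=\mu h^{-2}-\lambda$ is exactly equation (\ref{neweq2}) of Proposition \ref{gomariprop} divided by $-m$, and the finiteness of $\int_{M}e^{-\phi}dV$ is correct (the paper gets it more directly from $Ric_{\phi}\geq\lambda g>0$ via Morgan/Wei--Wylie, rather than through $\int_{N}e^{-f}dV_{N}<\infty$ and Fubini, but both work). The problem is item \ref{item01}. Your argument rests on the global identity $\int_{M}\Delta_{\phi}(\ln h)\,e^{-\phi}dV=0$ on a complete \emph{noncompact} base, which by Gaffney-type theorems requires $\int_{M}|\nabla\ln h|\,e^{-\phi}dV<\infty$; you defer this to ``a priori bounds on $h$ and $|\nabla h|$ obtained earlier in the paper,'' but no such bounds exist prior to this point. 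The gradient estimate $|\nabla(m\ln h)|^{2}\leq-m\lambda$ is proved in the paper only for $\lambda<0$ (Theorem \ref{gradbound}), and for $\lambda\geq0$ the corresponding statement \emph{is} item \ref{item01} itself; obtaining it is precisely the hard step, carried out in Theorem \ref{gradbound} by localizing the Bochner identity (\ref{gradeq}) with a cutoff in the distance function, controlling $\Delta_{\phi}r$ via Perelman's estimate (Lemma \ref{lemaperel}) because no Laplacian comparison theorem is available here, and running a maximum-principle argument at the maximum of $F_{R}|\nabla v|^{2}$. Your parenthetical alternative (that $-\ln h$ is strictly $\Delta_{\phi}$-subharmonic and bounded above) has the same defect: it needs $h$ bounded away from zero, which is not given. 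So the proof of item \ref{item01} as written is circular-in-spirit and incomplete; it would need either the Gaffney integrability established independently or the localization argument the paper actually performs.

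Item \ref{item02}, by contrast, is correct and takes a genuinely different route from the paper. You observe that $h\leq\sqrt{\mu/\lambda}$ makes $\ln h$ a bounded-above $\Delta_{\phi}$-subharmonic function, and conclude by parabolicity of the finite-weighted-volume manifold $(M,g,e^{-\phi}dV)$. The paper instead runs a Fujita-type argument (Theorem \ref{integlemma} and Corollary \ref{restriction2tosolution}): it tests equation (\ref{neweq2}) against the first Dirichlet eigenfunction on exhausting domains, exploits the convexity of $e^{u}$ via Jensen's inequality, and uses $\lambda_{1}(-\Delta_{\phi})=0$ (which, like your parabolicity, follows from the finite weighted volume). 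Your version is shorter and uses only the standard Liouville characterization of parabolicity; the paper's eigenvalue method is more quantitative and is what yields the sharper conclusions of Theorem \ref{integlemma} in the non-vanishing-$\lambda_{1}$ regime. Either is acceptable for item \ref{item02} once item \ref{item01} is in place.
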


Item \ref{item02} in the theorem above seems to be the first restriction in the literature to the existence of warped product Ricci solitons when $\lambda>0$, $\mu>0$ and $M$ is noncompact, and has no counterpart in the warped product Einstein setting.

In order to prove item \ref{item01} of Theorem \ref{noncompacshrinking}, motivated by \cite{wang}, we localize the equation satisfied by $|\nabla (m\ln{h})|^2$ and apply the maximum principle. To obtain the rigidity of item \ref{item02}, we work with the first eigenvalue $\lambda_{1}(-\Delta_{f-m\ln h})$ of the weighted Laplacian
\begin{align*}
	\Delta_{f-m\ln h}u=\Delta u-\left\langle\nabla(f-m\ln h),\nabla u\right\rangle.
\end{align*}
It turns out that such an eigenvalue vanishes when $M$ is noncompact and $\lambda>0$, and this is essential in our proof. This strategy is inspired by nonexistence results due to Fujita \cite{fujita}.

Turning to noncompact steady warped product gradient Ricci solitons we prove the following.

\begin{theorem}\label{noncompacsteady}
	Let $M^n\times_{h}F^m$ be a complete gradient steady Ricci soliton and $R_{F}$ the scalar curvature of $F^m$. Suppose that $M$ is noncompact, fix $q_{0}\in F$ and let $\mu$ be defined by $m\mu=R_{F}(q_{0})$. The following are true:
	\begin{enumerate}
		\item\label{item1} If $h$ is not constant, then $\mu>0$.
		\item\label{item2} If $\displaystyle\sup_{M}h<+\infty$, then $h$ is constant.
	\end{enumerate}
\end{theorem}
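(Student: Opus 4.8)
The plan is to transport the whole problem to the base $M$ endowed with the drift $w=f-m\ln h$ and the auxiliary function $v=-m\ln h$ (so $h=e^{-v/m}$), and then run a Bochner computation for $|\nabla(m\ln h)|^{2}$ in the spirit of \cite{wang}. Using $h^{-1}\nabla\nabla h=\nabla\nabla(\ln h)+d(\ln h)\otimes d(\ln h)$, the first equation of $(\ref{system_withoutdependence})$ with $\lambda=0$ rewrites as $Ric+\nabla\nabla w=\tfrac{1}{m}\,dv\otimes dv$, so the Bakry--Émery tensor $Ric_{w}:=Ric+\nabla\nabla w$ is positive semidefinite on $M$; dividing the second equation by $h^{2}$ and using $h^{-1}\Delta h=\Delta(\ln h)+|\nabla\ln h|^{2}$ rewrites it as
\begin{equation*}
\Delta_{w}(\ln h)=\mu\,h^{-2},\qquad\text{equivalently}\qquad \Delta_{w}v=-m\mu\,e^{\frac{2}{m}v},
\end{equation*}
with $\Delta_{w}u=\Delta u-\langle\nabla w,\nabla u\rangle$; differentiating gives $\Delta_{w}(h^{2})=2\mu+4|\nabla h|^{2}$. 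Since $h$ is nonconstant and $M$ is complete, the total space $N=M\times_{h}F$ is a complete gradient steady Ricci soliton whose potential $f$ is lifted from $M$, so Hamilton's identity $R_{N}+|\nabla f|^{2}=\mathrm{const}$ bounds $|\nabla f|$, $Ric_{N,f}\equiv 0$, and for functions lifted from $M$ one has $\Delta_{N,f}=\Delta_{w}$.

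For item \ref{item1} I prove the contrapositive: if $\mu\le 0$, then $h$ is constant. Put $\psi=|\nabla\ln h|^{2}=m^{-2}|\nabla(m\ln h)|^{2}\ge 0$. Inserting $Ric_{w}=\tfrac{1}{m}\,dv\otimes dv$ and $\langle\nabla\ln h,\nabla(\Delta_{w}\ln h)\rangle=-2\mu\,h^{-2}\psi$ into the weighted Bochner formula for $u=\ln h$ gives
\begin{equation*}
\tfrac12\Delta_{w}\psi=|\nabla\nabla\ln h|^{2}+m\psi^{2}-2\mu\,h^{-2}\psi\ \geq\ m\psi^{2}\ \geq\ 0 ,
\end{equation*}
where both inequalities use $\mu\le 0$. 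I then localize: fix $o\in M$ and a cutoff $\eta$ with $\eta\equiv 1$ on $B_{R}(o)$, $\operatorname{supp}\eta\subset B_{2R}(o)$, $|\nabla\eta|\le C/R$, examine the maximum of $\eta^{2}\psi$ on $B_{2R}(o)$, and, using the Kato inequality $|\nabla\nabla\ln h|^{2}\ge\bigl|\nabla|\nabla\ln h|\bigr|^{2}$ together with Cauchy--Schwarz to absorb the terms produced by $\nabla\eta$, let $R\to\infty$. The quadratic gain $m\psi^{2}$ forces $\psi\equiv 0$, i.e.\ $h$ constant.

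For item \ref{item2}, suppose $h$ is nonconstant; item \ref{item1} then yields $\mu>0$, so $\Delta_{w}(h^{2})=2\mu+4|\nabla h|^{2}\ge 2\mu>0$. Seen on $N$, the function $h^{2}$ is smooth, lifted from $M$, bounded above by $(\sup_{M}h)^{2}<+\infty$, and satisfies $\Delta_{N,f}(h^{2})=\Delta_{w}(h^{2})\ge 2\mu>0$. Because $N$ is a complete gradient steady Ricci soliton (hence $Ric_{N,f}\equiv 0$ and $|\nabla f|$ bounded), the weak maximum principle holds for $\Delta_{N,f}$, producing points $x_{k}\in N$ with $h^{2}(x_{k})\to\sup h^{2}$ and $\Delta_{N,f}(h^{2})(x_{k})\le 1/k$; for $k>1/(2\mu)$ this contradicts $\Delta_{N,f}(h^{2})\ge 2\mu$. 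Hence $h$ is constant (and then the second equation of $(\ref{system_withoutdependence})$ forces $\mu=0$).

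The main obstacle is the localization in item \ref{item1}: the function $\psi$ is not known to be bounded, so the error terms coming from $\nabla\eta$ and from $\Delta_{w}\eta$ on the annulus $B_{2R}(o)\setminus B_{R}(o)$ must be handled carefully, and it is precisely the extra term $m\psi^{2}$ in the Bochner inequality, together with $Ric_{w}\ge 0$ on $M$, that lets the estimate close as $R\to\infty$. A secondary point is the validity of the weak maximum principle for $\Delta_{N,f}$ on the steady soliton $N$, which I would obtain from the boundedness of $|\nabla f|$ and the Omori--Yau maximum principle on $(N,g_{N})$.
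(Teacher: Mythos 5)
Your overall strategy coincides with the paper's: after passing to the base with the drift $\varphi=f-m\ln h$ and $v=-m\ln h$ (Proposition \ref{gomariprop}), item \ref{item1} is the localized Bochner estimate for $|\nabla v|^{2}$ in which the quadratic term $\frac{2}{m}|\nabla v|^{4}$ closes the argument (this is Theorem \ref{gradbound} with $\lambda=0$), and item \ref{item2} is a weak maximum principle at infinity applied to equation $(\ref{neweq2})$ (Theorem \ref{zeronegtv}). Your treatment of item \ref{item2} is a harmless variant: you apply the principle to $h^{2}$ on the total space $N$ via the identity $\Delta_{N,f}u=\Delta_{\varphi}u$ for lifted $u$, whereas the paper applies the weak minimum principle to $v$ on $(M,e^{-\varphi}dV)$, justified by $Ric_{\varphi}\geq\lambda g$ together with the weighted volume estimates of Wei--Wylie and \cite[Theorem 9]{pirise2}; both routes produce the same contradiction $0\leq-m\mu e^{\frac{2}{m}\inf_{M}v}<0$.

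The one step you flag but do not actually supply is the decisive one in item \ref{item1}: at the maximum of $\eta^{2}\psi$ you must bound $\Delta_{\varphi}(\eta^{2})$, hence $\Delta_{\varphi}r$, from above, and neither Kato's inequality nor Cauchy--Schwarz touches this zeroth-order term --- they only absorb the cross term $\langle\nabla\eta^{2},\nabla\psi\rangle$. No Laplacian comparison theorem is known for the drift $\varphi=f-m\ln h$ (this is precisely the difficulty the paper singles out in the introduction), and the resolution is Perelman's estimate, Lemma \ref{lemaperel}: since $Ric_{\varphi}\geq0$ here, one gets $\Delta_{\varphi}r\leq n-1+|\nabla\varphi(O)|+\max_{\overline{B_{O}(1)}}|Ric|$ for $r>1$, which is what makes the error terms at the maximum point $O(1/R)$ and lets $R\to+\infty$ force $\psi\equiv0$. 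Your instinct that $Ric_{\varphi}\geq0$ and the $m\psi^{2}$ gain ``let the estimate close'' is correct, but without stating and using such a bound on $\Delta_{\varphi}r$ (or an equivalent device) the cutoff argument does not close; everything else in your proposal is sound.
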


Item \ref{item1} of Theorem \ref{noncompacsteady} is an extension of \cite[Theorem 1.2]{case} and \cite[Theorem 3.3]{wang} to warped product gradient Ricci solitons. We do not know if item \ref{item2} was already known in the case of warped products Einstein manifolds.

Theorem \ref{noncompacshrinking} and Theorem \ref{noncompacsteady} assert that rigidity or nonexistence is obtained when $\lambda\geq0$, provided $\mu\leq0$. The same does not hold for expanding warped product gradient Ricci solitons, as there are examples \cite[Corollary 2]{fefrego} (see also Section \ref{prelimi} below) which are complete, have nonconstant warping function, nonconstant potential function and $\mu\leq0$. In this case, we are able to prove the following.

\begin{theorem}\label{noncompacexpanding}
	Let $M^n\times_{h}F^m$ be a complete gradient expanding Ricci soliton and $R_{F}$ the scalar curvature of $F^m$. Suppose that $M$ is noncompact, fix $q_{0}\in F$ and let $\mu$ be defined by $m\mu=R_{F}(q_{0})$. The following are true:
		\begin{enumerate}
			\item\label{negitem1} If $\mu<0$, then $\displaystyle h\geq\sqrt{\frac{\mu}{\lambda}}$.
			\item\label{negitem2} If $\mu\leq0$, then $\displaystyle |\nabla\ln{h}|^2\leq-\frac{\lambda}{m}$.
			\item\label{negitem3} If $\mu<0$ and $\displaystyle\sup_{M}h<+\infty$, then $\displaystyle |\nabla\ln{h}|^2\leq-\frac{\lambda}{m}+\frac{2\mu}{m\left(\sup_{M}h\right)^2}$.
			\item\label{negitem4} If $\mu\geq0$ and $\displaystyle\sup_{M}h<+\infty$, then $h$ is constant.
		\end{enumerate}
\end{theorem}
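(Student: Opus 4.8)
The plan is to reduce everything to two identities on the base $M$ and then run maximum-principle-at-infinity arguments. Write $\phi:=f-m\ln h$ and $v:=-m\ln h$, so that the weighted Laplacian in the statement is $\Delta_\phi$, $\nabla v=-mh^{-1}\nabla h$ and $|\nabla v|^2=m^2|\nabla\ln h|^2$. From the elementary identity $-mh^{-1}\nabla\nabla h=\tfrac1{1}\nabla\nabla v-\tfrac1m dv\otimes dv$, the first equation of $(\ref{system_withoutdependence})$ can be rewritten as $Ric+\nabla\nabla\phi=\lambda g+\tfrac1m dv\otimes dv$, so that $Ric_\phi:=Ric+\nabla\nabla\phi\ge\lambda g$ is bounded below; and inserting the second equation of $(\ref{system_withoutdependence})$ into the expression for $\Delta_\phi h$ gives
\begin{equation*}
h\,\Delta_\phi h=|\nabla h|^2-\lambda h^2+\mu,\qquad\text{equivalently}\qquad\Delta_\phi v=m\lambda-m\mu\,h^{-2}.
\end{equation*}
I would also record that on functions lifted from $M$ the operator $\Delta_\phi$ is the drift Laplacian $\Delta_f$ of the complete soliton $M^n\times_h F^m$, which is what makes a maximum principle at infinity available.

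Items \ref{negitem1} and \ref{negitem4} come from this last identity together with the maximum principle at infinity applied to $h$. For item \ref{negitem1} ($\mu<0$), apply it at $\inf_M h$, finite since $h>0$: one gets $p_k$ with $h(p_k)\to\inf_M h$, $|\nabla h(p_k)|\to0$ and $\Delta_\phi h(p_k)\ge-\varepsilon_k\to0$; substituting into the identity and letting $k\to\infty$ gives $\lambda(\inf_M h)^2\le\mu$, hence $\inf_M h\ge\sqrt{\mu/\lambda}$ because $\lambda<0$ and $\mu/\lambda>0$, i.e. $h\ge\sqrt{\mu/\lambda}$ on $M$. For item \ref{negitem4} ($\mu\ge0$, $\sup_M h<\infty$), applying the principle at $\sup_M h$ in the same way yields $-\lambda(\sup_M h)^2+\mu\le0$; with $\lambda<0$ and $\mu\ge0$ this forces $\sup_M h\le0$, which is absurd since $h>0$, so no such soliton exists — this is item \ref{negitem4}, stated there (vacuously) as ``$h$ is constant''.

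Items \ref{negitem2} and \ref{negitem3} rest on the weighted Bochner formula for $v$. Using $Ric_\phi=\lambda g+\tfrac1m dv\otimes dv$ to evaluate $Ric_\phi(\nabla v,\nabla v)$ and $\Delta_\phi v=m\lambda-m\mu h^{-2}$ to compute $\langle\nabla v,\nabla\Delta_\phi v\rangle=-2\mu h^{-2}|\nabla v|^2$, one arrives, with $P:=|\nabla v|^2=m^2|\nabla\ln h|^2$, at
\begin{equation*}
\tfrac12\Delta_\phi P=|\nabla\nabla v|^2-2\mu h^{-2}P+\lambda P+\tfrac1m P^2.
\end{equation*}
When $\mu\le0$ the first two terms on the right are nonnegative, so $\Delta_\phi P\ge\tfrac2m P(P+m\lambda)$; a localized maximum principle on geodesic balls, the cutoff term being controlled through $Ric_\phi\ge\lambda g$, then gives $\sup_{B_a}P\le-m\lambda+o(1)$ as $a\to\infty$, hence $P\le-m\lambda$, which is item \ref{negitem2}. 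When $\mu<0$ and $\sup_M h<\infty$, $P$ is already bounded by item \ref{negitem2}, so the maximum principle at infinity applies directly; retaining the term $-2\mu h^{-2}P\ge-2\mu(\sup_M h)^{-2}P>0$ and evaluating at $\sup_M P$ yields $P\le-m\lambda+2m\mu(\sup_M h)^{-2}$, i.e. item \ref{negitem3}.

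The main obstacle is the noncompact analysis behind these applications of the maximum principle, most acutely in item \ref{negitem2}, where $|\nabla\ln h|$ is not known to be bounded a priori. In the warped product Einstein case one has the finite-dimensional bound $Ric_v^{\,n+m}=\lambda g$, which yields Laplacian comparison and localization for free; here only the infinite-dimensional bound $Ric_\phi\ge\lambda g$ is at hand, so the localization must be carried out by hand (alternatively, one invokes the known maximum-principle properties of complete gradient Ricci solitons through the identification $\Delta_\phi=\Delta_f$). The inequality $\Delta_\phi P\ge\tfrac2m P(P+m\lambda)$ is of Keller--Osserman type, which is ultimately what forces $P$ to be finite; turning this into the quantitative estimate on $B_a$ and bookkeeping the cutoff errors is the technical heart of items \ref{negitem2} and \ref{negitem3}.
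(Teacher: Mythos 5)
Your reduction to the two identities $Ric_{\varphi}=\lambda g+\frac{1}{m}dv\otimes dv$ and $\Delta_{\varphi}v=m\lambda-m\mu h^{-2}$ is exactly Proposition \ref{gomariprop}, and for items \ref{negitem2}, \ref{negitem3} and \ref{negitem4} your arguments coincide with the paper's. Item \ref{negitem2} is Theorem \ref{gradbound}: the Bochner inequality $(\ref{gradeq})$ for $P=|\nabla v|^2$, localized by cutoffs with $\Delta_{\varphi}r$ controlled through Perelman's estimate (Lemma \ref{lemaperel}) under $Ric_{\varphi}\geq\lambda g$; you correctly identify this localization as the technical heart and the reason the $m$-quasi-Einstein Laplacian comparison is unavailable. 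Item \ref{negitem3} is Theorem \ref{negneg2}: the weak maximum principle at infinity applied to $P$, which is admissible because $P$ is already bounded by item \ref{negitem2}. Item \ref{negitem4} is Theorem \ref{zeronegtv}: the paper runs the weak minimum principle on $v$ (bounded below exactly when $\sup_{M}h<+\infty$), you run the weak maximum principle on $h$; since at a supremum of $h$ the term $|\nabla h|^2\geq0$ in $h\Delta_{\varphi}h=|\nabla h|^2-\lambda h^2+\mu$ enters with the favourable sign and can be discarded, only the weak principle is needed and the two versions are equivalent.

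The one genuinely different step is item \ref{negitem1}. The paper proves it as Theorem \ref{negneg}, by repeating the cutoff argument of Lemma \ref{lema1} for $H_{R}=F_{R}e^{\frac{2}{m}v}=F_{R}h^{-2}$, which yields $v\leq\frac{m}{2}\ln(\lambda/\mu)$ with no a priori boundedness assumption. You instead produce an Omori--Yau sequence for $h$ at $\inf_{M}h$, which is automatically finite since $h>0$. This is shorter, but here the gradient term enters with the \emph{unfavourable} sign: from $h\Delta_{\varphi}h=|\nabla h|^2-\lambda h^2+\mu$ you cannot drop $|\nabla h(p_{k})|^2\geq0$, so your argument genuinely requires $|\nabla h(p_{k})|\to0$, i.e.\ the full Omori--Yau maximum principle for $\Delta_{\varphi}$, not merely the weak maximum principle at infinity that the paper derives from $Ric_{\varphi}\geq\lambda g$ via the volume growth of \cite{weywyl} and \cite{pirise2}. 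The stronger principle is in fact available here, as you hint: on functions lifted from $M$ the operator $\Delta_{\varphi}$ is the drift Laplacian $\Delta_{f}$ of the complete gradient Ricci soliton $M\times_{h}F$, for which the full Omori--Yau principle is known to hold, and an Omori--Yau sequence for $h\circ\pi$ on the total space projects to the sequence you need on $M$. You should make this justification explicit, since with only the weak principle your proof of item \ref{negitem1} does not close, whereas the paper's localization of $H_{R}$ sidesteps the issue entirely.
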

We observe that items \ref{negitem1} and \ref{negitem2} extend \cite[Theorem 3.2]{wang} and \cite[Theorem 5.3]{wang2}, respectively, to warped product gradient Ricci solitons. Furthermore, the examples of \cite[Corollary 2]{fefrego} show that the estimate of item \ref{negitem2} is sharp. Again, we do not know if item \ref{negitem4} was already known in the case of warped products Einstein manifolds. Item \ref{negitem3}, on the other hand, has an immediate consequence, state below, which generalizes \cite[Theorem 5]{barros}.

\begin{corollary}
	Let $M^n\times_{h}F^m$ be a gradient expanding Ricci soliton and $R_{F}$ the scalar curvature of $F^m$. Suppose that $M$ is complete and noncompact, fix $p_{0}\in M$ and let $\mu$ be defined by $m\mu=R_{F}(q_{0})$. If $\mu<0$ and $\displaystyle h\leq\sqrt{\frac{2\mu}{\lambda}}$, then $h$ is constant.
\end{corollary}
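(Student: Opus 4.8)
The plan is to deduce the Corollary directly from item \ref{negitem3} of Theorem \ref{noncompacexpanding}. First I would note that the hypothesis $h\leq\sqrt{2\mu/\lambda}$, together with $\mu<0$ and $\lambda<0$ (the soliton is expanding), forces $h$ to be bounded above, so that $S:=\sup_{M}h<+\infty$ is finite and satisfies $S\leq\sqrt{2\mu/\lambda}$, i.e.\ $S^2\leq 2\mu/\lambda$. In particular $\sup_M h<+\infty$, so item \ref{negitem3} applies and gives
\begin{align*}
	|\nabla\ln h|^2\leq-\frac{\lambda}{m}+\frac{2\mu}{m S^2}
\end{align*}
pointwise on $M$.

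Next I would estimate the right-hand side. From $S^2\leq 2\mu/\lambda$ and the fact that $\mu<0$ (so $2\mu<0$) and $\lambda<0$, dividing the inequality $S^2\leq 2\mu/\lambda$ by $S^2>0$ and multiplying through carefully by the appropriate signs yields $2\mu/S^2\leq\lambda$; equivalently $\frac{2\mu}{mS^2}\leq\frac{\lambda}{m}$. Substituting into the bound above gives $|\nabla\ln h|^2\leq-\frac{\lambda}{m}+\frac{\lambda}{m}=0$, hence $\nabla\ln h\equiv0$ on $M$, and therefore $h$ is constant.

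The only subtlety — and the step to get right rather than a genuine obstacle — is the bookkeeping of signs: since $\lambda<0$ for an expanding soliton, inequalities get reversed when dividing by $\lambda$, and one must confirm that $\sqrt{2\mu/\lambda}$ is a well-defined positive real (which holds precisely because $\mu$ and $\lambda$ are both negative, making $2\mu/\lambda>0$). Once the sign analysis is arranged, the conclusion is immediate from item \ref{negitem3}. I would also remark, to connect with the literature as the text promises, that when $m=+\infty$ this recovers \cite[Theorem 5]{barros}, since the bound $h\leq\sqrt{2\mu/\lambda}$ is exactly the condition $v=-m\ln h\geq$ (appropriate constant) in the $m$-quasi-Einstein normalization, but this remark is not needed for the proof itself.
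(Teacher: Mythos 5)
Your proof is correct and follows exactly the route the paper intends: the corollary is stated as an immediate consequence of item \ref{negitem3}, and your sign bookkeeping ($S^2\leq 2\mu/\lambda$ with $\lambda<0$, $\mu<0$ giving $2\mu/S^2\leq\lambda$ and hence $|\nabla\ln h|^2\leq 0$) is precisely the intended one-line deduction, equivalent to the ``in particular'' clause of Theorem \ref{negneg2} under the substitution $v=-m\ln h$. The closing aside about $m=+\infty$ and \cite{barros} is not quite right (that theorem concerns finite $m$), but you correctly flag it as inessential to the argument.
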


Other rigidity results concerning both Einstein manifolds and gradient Ricci solitons on warped product manifolds can be found in \cite{rimoldi} and \cite{gomari}, respectively, where the authors conclude nonexistence by assuming integrability conditions on the warping and potential functions. Lastly, we want to remark that Theorem \ref{noncompacsteady} and Theorem \ref{noncompacexpanding} above improve Theorem 1.2 and Theorem 1.5 of \cite{gomari}, respectively.

In order to prove the theorems above, it will be necessary to localize some equations. One of the requirements when using the distance function to perform cutoffs, is that one needs to have available some type of Laplace comparison theorem. This is the case for $m$-quasi-Einstein manifolds \cite[Lemma 3.1]{wang}. However, when one is working with the base of a gradient warped product Ricci soliton, a similar result is not known to hold, and one should get on with in a different way. To overcome this difficulty, we proceed as in Perelman \cite[Lemma 8.3]{perelman} (see also \cite[Lemma 18.6]{book3}) and as in \cite[Theorem 27.2]{book4}.

This paper is organized as follows. In Section \ref{prelimi}, we recall a useful change of dependent coordinates, first performed in \cite{gomari}, collect some useful equations and prove Theorem \ref{compactbasis}. In Section \ref{proofmainTh}, we present the proofs of our main theorems, arranged in the following way: in Subsection \ref{localize}, we localize certain elliptic equations and apply the maximum principle to get some information when $\mu\leq0$; in Subsection \ref{fujita'sargument}, we explore the first eigenvalue of a weighted Laplacian when $\lambda>0$ and $\mu>0$; in Subsection \ref{wmpatinf} we apply the weak Maximum Principle at Infinity to the case where $\lambda\leq0$; In Subsection \ref{howtoprove}, we prove Theorems \ref{noncompacshrinking}, \ref{noncompacsteady} and \ref{noncompacexpanding}.

\section{Preliminaries}\label{prelimi}

Let $(M^{n},g)$ and $(F^{m},g_{F})$ be Riemannian manifolds, $\pi:M\times F\rightarrow M$ and $\sigma:M\times F\rightarrow F$ the canonical projections and $h:M\rightarrow\mathbb{R}$ a positive smooth function. We consider on the the product manifold $N^{n+m}=M^{n}\times F^{m}$ the metric $g_{N}=g+h^{2}g_{F}$, defined by
\begin{equation*}
	g_{N}=\pi^{*}g+(h\circ\pi)^{2}\sigma^{*}g_{F}.
\end{equation*}
We denote this Riemannian manifold by $M\times_{h}F$ and call it the \textit{warped product} between $M$ and $F$. We call $M$ the \textit{base}, $F$ the \textit{fiber} and $h$ the \textit{warping function} of the warped product. For more information on the geometry of warped products, see \cite{bishop,oneill}.

Given a warped product $M\times_{h}F$, we consider a potential function $f:M\times F\rightarrow\mathbb{R}$ and a constant $\lambda$ satisfying $(\ref{eqriccisoliton})$. As we already mentioned in the introduction, assuming $M$ complete and $h$ nonconstant imply that $f$ is lifted from the base \cite[Corollary 2.2]{bo-te}. Furthermore, using the properties of a warped product manifold \cite{bishop,oneill}, one proves that $(\ref{eqriccisoliton})$ and $(\ref{system_withoutdependence})$ are equivalent. Before we proceed with the results of this section, consider the following example.
\begin{example}[Corollary 2 of \cite{fefrego}] 
	Consider $M=\mathbb{R}^n$ with coordinates $x=(y,x_{n})=(x_{1}\ldots,x_{n-1},x_{n})$, constants $c_{1},c_{2}\geq0$ with $c_{1}^2+c_{2}^2\neq0$, $m\in(0,+\infty)$ and $\lambda<0$. Define the functions $f(x)=\frac{\lambda}{2}|y|^2$ and $h(x)=c_{1}e^{\sqrt{-\frac{\lambda}{m}}x_{n}}+c_{2}e^{-\sqrt{-\frac{\lambda}{m}}x_{n}}$,
%	\begin{align*}
%		&f(x)=\frac{\lambda}{2}|y|^2,\\
%		&h(x)=c_{1}e^{\sqrt{-\frac{\lambda}{m}}x_{n}}+c_{2}e^{-\sqrt{-\frac{\lambda}{m}}x_{n}},
%	\end{align*}
	consider the constant $\mu$ defined by $m\mu=4c_{1}c_{2}(m-1)\lambda$, and a complete Einstein manifold $F^{m}$ satisfying $Ric_{F}=\mu g_{F}$. A simple computation shows that equations $(\ref{system_withoutdependence})$ are satisfied. Consequently, $\mathbb{R}^{n}\times_{h}F^{m}$ is a complete warped product Ricci soliton with potential function $f$, $\lambda<0$, $\mu\leq0$ and noncompact base. Furthermore:
	\begin{enumerate}
		\item Taking $c_{1}=0$ and $c_{2}>0$, we have $\displaystyle|\nabla\ln{h}|^2=-\frac{\lambda}{m}$. This shows that the estimate of item \ref{negitem2} in Theorem \ref{noncompacexpanding} is sharp.
		\item Taking $m\in(1,+\infty)$, we have $\displaystyle h(x)\geq h(x_{0})=\sqrt{\frac{m\mu}{(m-1)\lambda}}>\sqrt{\frac{\mu}{\lambda}}$,  where $x_{0}$ is the minimum of $h$.
	\end{enumerate}
\end{example}

For our purposes we consider the following change of dependent variables, originally performed in \cite{gomari}.

\begin{proposition}[\cite{gomari}]\label{gomariprop}
	Let $M\times F$ be a product manifold, $\lambda,\ \mu\in\mathbb{R}$ and $f,\ h\in C^{\infty}(M)$, with $h$ positive. Then $M\times_{h}F$ is a complete warped product Ricci soliton with potential function $f$ and soliton constant $\lambda$ if and only if the functions
	\begin{equation}\label{change}
		\begin{array}[pos]{lll}
			v=-m\ln h\ \ \ \ \text{and}\ \ \ \ \varphi=f-m\ln h,
		\end{array}
	\end{equation}
	satisfy the equations
	\begin{align}
		&Ric_{\varphi}=\lambda g+\frac{1}{m}d v\otimes d v\label{eiteiS},\\
		&-\Delta_{\varphi}v=-m\lambda+m\mu e^{\frac{2}{m}v}\label{neweq2},
	\end{align}
	where $Ric_{\varphi}=Ric+\nabla\nabla\varphi$ is the Bakry-Emery Ricci tensor and $\Delta_{\varphi}$, the $\varphi$-Laplacian, is defined for a function $u\in C^{2}(M)$ by $\Delta_{\varphi}u=\Delta u-\langle\nabla\varphi,\nabla u\rangle$.
\end{proposition}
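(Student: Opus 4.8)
The plan is to reduce the statement to the equivalence between $(\ref{eqriccisoliton})$ and the system $(\ref{system_withoutdependence})$ recalled above, and then to translate $(\ref{system_withoutdependence})$ through the change of variables $(\ref{change})$ by a direct computation. Since the third equation of $(\ref{system_withoutdependence})$, namely $Ric_{F}=\mu g_{F}$, is left untouched by $(\ref{change})$ and appears on both sides of the asserted equivalence, it suffices to show that the first two equations of $(\ref{system_withoutdependence})$ are equivalent, respectively, to $(\ref{eiteiS})$ and $(\ref{neweq2})$ once one sets $v=-m\ln h$ and $\varphi=f-m\ln h$.

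First I would record the elementary consequences of $v=-m\ln h$, equivalently $h=e^{-v/m}$: differentiating gives $\nabla h=-\frac{1}{m}h\nabla v$, whence $h^{-1}\nabla h=-\frac{1}{m}\nabla v$ and $h^{-2}=e^{2v/m}$, and applying the product rule to the Hessian and taking traces yields
\[
mh^{-1}\nabla\nabla h=\frac{1}{m}\,dv\otimes dv-\nabla\nabla v,
\qquad
h^{-1}\Delta h=\frac{1}{m^{2}}|\nabla v|^{2}-\frac{1}{m}\Delta v.
\]
Substituting the first identity into the first equation of $(\ref{system_withoutdependence})$ and using $\varphi=f+v$, so that $\nabla\nabla\varphi=\nabla\nabla f+\nabla\nabla v$, turns $Ric+\nabla\nabla f-mh^{-1}\nabla\nabla h=\lambda g$ directly into $Ric+\nabla\nabla\varphi=\lambda g+\frac{1}{m}dv\otimes dv$, which is $(\ref{eiteiS})$.

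For the scalar equation I would divide the second equation of $(\ref{system_withoutdependence})$ by $h^{2}$ and substitute the identities above; the terms in $|\nabla v|^{2}$ combine with coefficient $-\frac{m-1}{m^{2}}-\frac{1}{m^{2}}=-\frac{1}{m}$, and after multiplying through by $m$ and using $\nabla\varphi=\nabla f+\nabla v$ to recognize $\Delta v-\langle\nabla v,\nabla\varphi\rangle=\Delta_{\varphi}v$, one arrives at $m\lambda=\Delta_{\varphi}v+m\mu e^{2v/m}$, i.e. $(\ref{neweq2})$. Since $h\mapsto v=-m\ln h$ is a bijection between positive smooth functions and smooth functions, and every step above is an equality manipulation that can be run in reverse, this proves the equivalence in both directions.

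I do not expect a genuine obstacle here: the argument is a bookkeeping computation. The only points requiring care are the correct expressions for $\nabla\nabla h$ and $\Delta h$ under the exponential substitution and the cancellation $(m-1)+1=m$ in the rewriting of the second scalar equation.
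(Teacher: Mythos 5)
Your proposal is correct and coincides with the proof the paper intends: the paper reduces Proposition \ref{gomariprop} to the equivalence of $(\ref{eqriccisoliton})$ with the system $(\ref{system_withoutdependence})$ and then dismisses the rest as ``a straightforward computation,'' which is precisely the substitution you carry out, and your key identities $mh^{-1}\nabla\nabla h=\frac{1}{m}\,dv\otimes dv-\nabla\nabla v$, $h^{-1}\Delta h=\frac{1}{m^{2}}|\nabla v|^{2}-\frac{1}{m}\Delta v$ and the cancellation $-\frac{m-1}{m^{2}}-\frac{1}{m^{2}}=-\frac{1}{m}$ all check out. Your remark that $Ric_{F}=\mu g_{F}$ must be read as present on both sides of the equivalence is the correct (and necessary) reading of the statement.
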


The proof of the proposition above is a straightforward computation. The point of highlighting it here is that these equations are similar to those characterizing a warped product Einstein manifold $M\times_{e^{-\frac{\varphi}{m}}}F$ with Einstein constant $\lambda$. These last equations are
\begin{align}\label{eiteiE}
	Ric_{\varphi}=\lambda g+\frac{1}{m}d\varphi\otimes d\varphi\ \ \ \text{and}\ \ \ -\Delta_{\varphi}\varphi=-m\lambda+m\mu e^{\frac{2}{m}\varphi},
\end{align}
where the fiber $F$ is also an Einstein manifold with Einstein constant $\mu$ and dimension $m$. Proposition \ref{gomariprop} allows a unified investigation, where $v=\varphi$ if and only if the warped product Ricci soliton is actually a warped product Einstein manifold. However, it should be pointed out that while the scalar equation in $(\ref{eiteiE})$ follows from  the tensorial one, it is not known whether $(\ref{neweq2})$ follows from $(\ref{eiteiS})$ in general.

On a measure metric space $(M,g,e^{-\varphi}dV)$, one has the following well known Bochner formula relating the $\varphi$-Laplacian with the Bakry-Emery Ricci tensor $Ric_{\varphi}$,
\begin{align*}
	\Delta_{\varphi}|\nabla u|^2=2|\nabla\nabla u|^2+2\left\langle\nabla u,\nabla\Delta_{\varphi}u\right\rangle+2Ric_{\varphi}(\nabla u,\nabla u).
\end{align*}
This identity seems to have appeared first in \cite[Proposition 2.1]{setti}. Plugging into it equations $(\ref{eiteiS})$ and $(\ref{neweq2})$, we have the following
\begin{proposition}
	Let $M^n$ be a manifold for which there are functions $\varphi,v\in C^{\infty}(M)$ and constants $\lambda,\ \mu\in\mathbb{R}$ and $m\in(0,+\infty)$ satisfying $(\ref{eiteiS})$ and $(\ref{neweq2})$. Then,
	\begin{align}\label{gradeq}
		\Delta_{\varphi}|\nabla v|^2=2|\nabla\nabla v|^2+2(\lambda-2\mu e^{\frac{2}{m}v})|\nabla v|^2+\frac{2}{m}|\nabla v|^4.
	\end{align}
\end{proposition}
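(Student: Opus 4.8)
The plan is to start from the weighted Bochner formula recalled just above the statement,
\begin{align*}
	\Delta_{\varphi}|\nabla v|^2=2|\nabla\nabla v|^2+2\left\langle\nabla v,\nabla\Delta_{\varphi}v\right\rangle+2Ric_{\varphi}(\nabla v,\nabla v),
\end{align*}
applied with $u=v$, and then simply substitute the two hypotheses $(\ref{eiteiS})$ and $(\ref{neweq2})$ into the last two terms on the right-hand side. So the work splits into two short computations, after which everything is assembled.

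First I would evaluate the curvature term. Contracting $(\ref{eiteiS})$ twice with $\nabla v$ gives
\begin{align*}
	Ric_{\varphi}(\nabla v,\nabla v)=\lambda|\nabla v|^2+\frac{1}{m}(dv\otimes dv)(\nabla v,\nabla v)=\lambda|\nabla v|^2+\frac{1}{m}|\nabla v|^4,
\end{align*}
since $(dv\otimes dv)(\nabla v,\nabla v)=\langle\nabla v,\nabla v\rangle^2=|\nabla v|^4$.

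Next I would handle the term $\langle\nabla v,\nabla\Delta_{\varphi}v\rangle$. From $(\ref{neweq2})$ one has $\Delta_{\varphi}v=m\lambda-m\mu e^{\frac{2}{m}v}$, whose gradient is $\nabla\Delta_{\varphi}v=-m\mu\cdot\frac{2}{m}e^{\frac{2}{m}v}\nabla v=-2\mu e^{\frac{2}{m}v}\nabla v$; pairing with $\nabla v$ yields $\langle\nabla v,\nabla\Delta_{\varphi}v\rangle=-2\mu e^{\frac{2}{m}v}|\nabla v|^2$. Substituting both expressions into the Bochner formula and collecting the $|\nabla v|^2$ terms gives exactly $(\ref{gradeq})$. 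There is no real obstacle here: the only points requiring a line of justification are the identity $(dv\otimes dv)(\nabla v,\nabla v)=|\nabla v|^4$ and the chain rule for $\nabla e^{\frac{2}{m}v}$, both of which are immediate.
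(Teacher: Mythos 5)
Your proposal is correct and follows exactly the route the paper indicates: apply the weighted Bochner formula to $u=v$ and substitute $(\ref{eiteiS})$ into the curvature term and $(\ref{neweq2})$ into the $\langle\nabla v,\nabla\Delta_{\varphi}v\rangle$ term. The paper leaves these substitutions implicit ("plugging into it equations $(\ref{eiteiS})$ and $(\ref{neweq2})$"), and your two computations carry them out accurately.
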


From now on, all theorems will be stated in terms of $\varphi$ and $v$, assuming that they satisfy equations $(\ref{eiteiS})$ and $(\ref{neweq2})$. We also note that as it is usual in the case of $m$-quasi-Einstein manifolds, the results do not require $m\in(0,+\infty)$ to be integer.

For the sake of completeness, we include the proof of Theorem \ref{compactbasis} here. The proof of item \ref{2estimate} is implicit in \cite{fefrego}, and we emphasize it here.

\begin{proof}[{\bf Proof of Theorem \ref{compactbasis}}]
	Integrating equation $(\ref{neweq2})$ with respect to $e^{-\varphi}dV$, we obtain
	\begin{align}\label{intident}
		\lambda\int_{M}e^{-\varphi}dV=\mu\int_{M}e^{\frac{2}{m}v}e^{-\varphi}dV.
	\end{align}
	
	According to the equation above, if $\lambda=0$, then we have $\mu=0$. Therefore, $(\ref{neweq2})$ becomes $\Delta_{\varphi}v=0$. Let $p_{+}\in M$ be a point where $|\nabla v|^2$ attains its maximum. From equation $(\ref{gradeq})$ we get $0\geq\Delta_{\varphi}|\nabla v|^2(p_{+})\geq\frac{2}{m}|\nabla v|^4(p_{+})$, from where $v$ is constant.
	
	If $\lambda<0$, equation $(\ref{intident})$ implies that $\mu<0$. Let $p_{1}$ and $p_{2}$ be points of $M$ where $v$ attains its minimum and maximum, respectively. Then $(\ref{neweq2})$ implies that $e^{\frac{2}{m}v(p_{2})}\leq\lambda\mu^{-1}\leq e^{\frac{2}{m}v(p_{1})}$, implying that $v$ is constant.
	
	Now consider the case where $\lambda>0$. When $m=1$, the only possibility is $\mu=0$ (see Remark 2.3 in \cite{case2}), which is impossible, in view of $(\ref{intident})$. Therefore, $m\geq2$. In this case, equation $(\ref{intident})$ implies that $\mu>0$. Now, let us assume that $\displaystyle h\leq\sqrt{\mu\lambda^{-1}}$. Using $(\ref{change})$ we conclude that $\Delta_{\varphi}u\leq0$ on $M$, and since $u$ attains its minimum, it follows from the Strong Maximum Principle that $v$ is constant. Similarly, if $\displaystyle h\geq\sqrt{\mu\lambda^{-1}}$, then $v$ is constant.
\end{proof}

\section{Proofs}\label{proofmainTh}

In this section we proof Theorem \ref{noncompacshrinking}, Theorem \ref{noncompacsteady} and Theorem \ref{noncompacexpanding}. This will be done in several steps along the next three subsections.
\subsection{Localizing and using the Maximum Principle}\label{localize}

We start this section with the following lemma, where we localize some equations.

\begin{lemma}\label{lema1}
	Let $M^n$ be a manifold for which there are functions $\varphi,v\in C^{\infty}(M)$ and constants $\lambda\in\mathbb{R}$, $\mu\leq0$ and $m\in(0,+\infty)$ satisfying $(\ref{eiteiS})$ and $(\ref{neweq2})$. Fix $O\in M$  and let $r:M\rightarrow[0,+\infty)$ be the the distance from $O$, defined as $r(x)=d(O,x)$. Now, given $b>0$, denote by $\eta:[0,+\infty)\rightarrow[0,1]$ the smooth function
	\begin{align}
		\eta(t)=
		\left\{ 
		\begin{array}[pos]{lll}
			1,\ t\in[0,1]\\\noalign{\smallskip}
			0,\ t\in[1+b,+\infty)
		\end{array}
		\right.
	\end{align}
	which is nonincreasing and satisfies
	\begin{align}\label{bound}
		\eta''-2\frac{(\eta')^{2}}{\eta}\geq -C,
	\end{align}
	for a universal constant $C>0$. For each $R\geq2$, consider the cutoff function $F_{R}:M\rightarrow[0,1]$, defined by
	\begin{align*}
		F_{R}(x)=\eta\left(\frac{r(x)}{R}\right).
	\end{align*}
	Therefore, the functions $G_{R}(x)=F_{R}(x)|\nabla v(x)|^2$ and $H_{R}(x)=F_{R}(x)e^{\frac{2}{m}v(x)}$, $x\in M$, satisfy
	\begin{align}
		&\Delta_{\varphi}G_{R}-2\left\langle\frac{\eta'\nabla r}{\eta R},\nabla G_{R}\right\rangle\geq\left[\frac{\eta''}{\eta R^2}-\frac{2(\eta')^{2}}{\eta^2R^{ 2}}+\frac{\eta'}{\eta R}\Delta_{\varphi}r+2\lambda\right]G_{R}+\frac{2}{m\eta}G_{R}^{2},\label{eqcutgrad}\\
		&\Delta_{\varphi}H_{R}-2\left\langle\frac{\eta'\nabla r}{\eta R},\nabla H_{R}\right\rangle\geq\left[\frac{\eta''}{\eta R^2}-\frac{2(\eta')^{2}}{\eta^2R^{ 2}}+\frac{\eta'}{\eta R}\Delta_{\varphi}r+2\lambda\right]H_{R}-\frac{2\mu}{\eta}H_{R}^{2},\label{eqcutexp}
	\end{align}
	at any point $x\in M$ so that $F_{R}(x)>0$.
\end{lemma}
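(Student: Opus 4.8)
The plan is to derive both inequalities from one product-rule computation for $\Delta_{\varphi}(F_R u)$, feeding in $(\ref{gradeq})$ when $u=|\nabla v|^{2}$ and a one-line consequence of $(\ref{neweq2})$ when $u=e^{\frac{2}{m}v}$, and then discarding the manifestly nonnegative terms.

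First I would record the elementary facts about the cutoff. A function $\eta$ as in the statement exists: take $\eta=\psi^{2}$ with $\psi\colon[0,+\infty)\to[0,1]$ smooth, nonincreasing, $\psi\equiv1$ on $[0,1]$ and $\psi\equiv0$ on $[1+b,+\infty)$; then $\eta''-2(\eta')^{2}/\eta=2\psi\psi''-6(\psi')^{2}$, which is bounded below by a universal constant $-C$ since $\psi,\psi',\psi''$ are bounded and supported in $[1,1+b]$. For $F_R=\eta(r/R)$ one has $\nabla F_R=\frac{\eta'}{R}\nabla r$ and $\Delta_{\varphi}F_R=\frac{\eta''}{R^{2}}|\nabla r|^{2}+\frac{\eta'}{R}\Delta_{\varphi}r=\frac{\eta''}{R^{2}}+\frac{\eta'}{R}\Delta_{\varphi}r$, using $|\nabla r|=1$. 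Since $r$ is merely Lipschitz, these identities and the inequalities below are to be understood in the support (barrier) sense, with the precise treatment of $\Delta_{\varphi}r$ deferred to the applications, where Perelman's trick is used.

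Now fix $x$ with $F_R(x)>0$ and observe that, for a smooth $u$, the product rule gives $\Delta_{\varphi}(F_R u)=u\,\Delta_{\varphi}F_R+2\langle\nabla F_R,\nabla u\rangle+F_R\,\Delta_{\varphi}u$. Writing $G_R=F_R u$, from $F_R\nabla u=\nabla G_R-u\nabla F_R$ and $u=G_R/F_R$ one rewrites the cross term as $2\langle\nabla F_R,\nabla u\rangle=2\langle\frac{\eta'\nabla r}{\eta R},\nabla G_R\rangle-\frac{2(\eta')^{2}}{\eta^{2}R^{2}}G_R$, so that moving the transport term to the left and inserting $\Delta_{\varphi}F_R$ yields
\[
\Delta_{\varphi}G_R-2\Big\langle\tfrac{\eta'\nabla r}{\eta R},\nabla G_R\Big\rangle
=\Big[\tfrac{\eta''}{\eta R^{2}}-\tfrac{2(\eta')^{2}}{\eta^{2}R^{2}}+\tfrac{\eta'}{\eta R}\Delta_{\varphi}r\Big]G_R+F_R\,\Delta_{\varphi}u .
\]
For $(\ref{eqcutgrad})$, take $u=|\nabla v|^{2}$, plug $(\ref{gradeq})$ into $F_R\Delta_{\varphi}u$, discard $2|\nabla\nabla v|^{2}\ge0$ and---this is where $\mu\le0$ enters---$-4\mu e^{\frac{2}{m}v}|\nabla v|^{2}\ge0$, and use $|\nabla v|^{2}=G_R/\eta$ to turn $\frac{2}{m}F_R|\nabla v|^{4}$ into $\frac{2}{m\eta}G_R^{2}$. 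For $(\ref{eqcutexp})$, take $u=e^{\frac{2}{m}v}=:w$; since $\nabla w=\frac{2}{m}w\nabla v$ one gets $\Delta_{\varphi}w=\frac{2}{m}w\,\Delta_{\varphi}v+\frac{4}{m^{2}}w|\nabla v|^{2}$, and $(\ref{neweq2})$ turns this into $\Delta_{\varphi}w=2\lambda w-2\mu w^{2}+\frac{4}{m^{2}}w|\nabla v|^{2}\ge2\lambda w-2\mu w^{2}$; then $F_R w^{2}=H_R^{2}/\eta$ produces the quadratic term $-\frac{2\mu}{\eta}H_R^{2}$. In both cases, collecting terms gives exactly $(\ref{eqcutgrad})$ and $(\ref{eqcutexp})$.

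The computation is routine; the only points needing care are the bookkeeping in converting $2\langle\nabla F_R,\nabla u\rangle$ into a term along $\nabla G_R$ plus the zeroth-order correction $-\frac{2(\eta')^{2}}{\eta^{2}R^{2}}G_R$, and making sure that every substitution $|\nabla v|^{2}=G_R/\eta$, $e^{\frac{2}{m}v}=H_R/\eta$, $F_R w^{2}=H_R^{2}/\eta$ is used only where $F_R=\eta>0$, exactly as the statement demands. Nothing here uses the sign of $\lambda$, so the lemma is uniform across the shrinking, steady and expanding cases.
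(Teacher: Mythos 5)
Your proposal is correct and follows essentially the same route as the paper: the product rule for $\Delta_{\varphi}(F_{R}u)$, the rewriting of the cross term as a transport term along $\nabla G_{R}$ minus $\frac{2|\nabla F_{R}|^{2}}{F_{R}^{2}}G_{R}$, substitution of $(\ref{gradeq})$ with $\mu\leq0$ used to drop $-4\mu e^{\frac{2}{m}v}|\nabla v|^{2}$, and the identities $\nabla F_{R}=\frac{\eta'}{R}\nabla r$, $\Delta_{\varphi}F_{R}=\frac{\eta''}{R^{2}}+\frac{\eta'}{R}\Delta_{\varphi}r$. Your explicit computation $\Delta_{\varphi}e^{\frac{2}{m}v}=2\lambda w-2\mu w^{2}+\frac{4}{m^{2}}w|\nabla v|^{2}$ correctly supplies the details the paper omits for $(\ref{eqcutexp})$, and your remarks on constructing $\eta=\psi^{2}$ and on the Lipschitz nature of $r$ are sensible additions.
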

\begin{proof}
	Let us compute the $\varphi$-Laplacian of $G_{R}(x)=F_{R}(x)|\nabla v(x)|^2$ and show how to obtain $(\ref{eqcutgrad})$. Using $(\ref{gradeq})$ we get
	\begin{align*}
		\Delta_{\varphi}G_{R}&=|\nabla v|^2\Delta_{\varphi}F_{R}+2\left\langle\nabla F_{R},\nabla|\nabla v|^2\right\rangle+F_{R}\Delta_{\varphi}|\nabla v|^2\\
		&\geq|\nabla v|^2\Delta_{\varphi}F_{R}+2\left\langle\nabla F_{R},\nabla|\nabla v|^2\right\rangle+F_{R}\left[2(\lambda-2\mu e^{\frac{2}{m}v})|\nabla v|^2+\frac{2}{m}|\nabla v|^4\right]\\
		&=\frac{\Delta_{\varphi}F_{R}}{F_{R}}G_{R}+2\left\langle\frac{\nabla F_{R}}{F_{R}},\nabla G_{R}\right\rangle-\frac{2|\nabla F_{R}|^2}{F_{R}^2}G_{R}+2(\lambda-2\mu e^{\frac{2}{m}v})G_{R}+\frac{2}{mF_{R}}G_{R}^2,
	\end{align*}
	from where
	\begin{align}\label{part1}
		\Delta_{\varphi}G_{R}-2\left\langle\frac{\nabla F_{R}}{F_{R}},\nabla G_{R}\right\rangle&\geq\left[\frac{\Delta_{\varphi}F_{R}}{F_{R}}-\frac{2|\nabla F_{R}|^2}{F_{R}^2}+2\lambda-4\mu e^{\frac{2}{m}v}\right]G_{R}+\frac{2}{mF_{R}}G_{R}^2.
	\end{align}
	Now, using $\mu\leq0$ and
	\begin{align*}
		\begin{split}
		&\nabla F_{R}=\frac{\eta'}{R}\nabla r,\\
		&\Delta_{\varphi}F_{R}=\frac{\eta''}{R^2}+\frac{\eta'}{R}\Delta_{\varphi}r,
		\end{split}
	\end{align*}
	we obtain $(\ref{eqcutgrad})$.
	
	Proceeding in a similar way we compute the $\varphi$-Laplacian of $H_{R}(x)=F_{R}(x)e^{\frac{2}{m}v(x)}$ and deduce $(\ref{eqcutexp})$. In this case we need to compute $\Delta_{\varphi}e^{\frac{2}{m}v}$ from $(\ref{neweq2})$. We will omit the details.
\end{proof}

When $\varphi=v$, that is, for $m$-quasi-Einstein manifolds, equations $(\ref{eqcutgrad})$ and $(\ref{eqcutexp})$ were investigated by Wang in \cite{wang,wang2}. There, in order to get rid of the Laplacian of the distance function in these equations, the authors used a Laplacian comparison theorem available for $m$-quasi-Einstein manifolds. Since at our conditions it is not known if a similar result holds, we use an estimate presented in \cite[Lemma 18.6]{book3}, first proved in Perelman \cite[Lemma 8.3]{perelman}.

%In order to get rid of the Laplacian of the distance function in $(\ref{eqcutgrad})$ and $(\ref{eqcutexp})$, Wang \cite{wang,wang2} used a Laplacian comparison theorem available for $m$-quasi-Einstein manifolds. Since at our conditions it is not known whether a similar result holds, we use an estimate presented in \cite[Lemma 18.6]{book3}, first proved in Perelman \cite[Lemma 8.3]{perelman}. 
\begin{lemma}[\cite{book3,perelman}]\label{lemaperel}
	Let $M^n$ be a manifold for which there is a function $\varphi\in C^{\infty}(M)$ and a constant $\lambda\in\mathbb{R}$ so that $Ric_{\varphi}\geq\lambda g$. Let $x\in M$ be so that $r(x)>1$, with $r(x)=d(O,x)$. Then,
	\begin{align*}
		(\Delta_{\varphi}r)(x)\leq n-1+|\nabla\varphi(O)|+\max_{\overline{B_{O}(1)}}|Ric|-\lambda r(x).
	\end{align*}
\end{lemma}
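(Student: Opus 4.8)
The statement is the Bakry--Emery analogue of Perelman's Laplacian comparison \cite[Lemma 8.3]{perelman}, \cite[Lemma 18.6]{book3}, and the plan is to imitate that argument. The idea is to bound the \emph{unweighted} Laplacian $\Delta r(x)$ from above along a minimizing geodesic, and then pass to $\Delta_{\varphi}r(x)=\Delta r(x)-\langle\nabla\varphi,\nabla r\rangle(x)$ using the fact that along such a geodesic $\gamma$ one has $\langle\nabla\varphi,\nabla r\rangle(\gamma(t))=(\varphi\circ\gamma)'(t)$.

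First I would fix a unit-speed minimizing geodesic $\gamma\colon[0,r_{0}]\to M$ with $\gamma(0)=O$, $\gamma(r_{0})=x$ and $r_{0}=r(x)>1$; if $x$ lies in the cut locus of $O$ the inequality is to be read in the barrier (support) sense, which is precisely what the subsequent maximum-principle arguments require. Parallel-translating an orthonormal frame $\{e_{1},\dots,e_{n}\}$ with $e_{1}=\gamma'$ along $\gamma$, the index form (second variation of arc length) applied to the test fields $\phi e_{i}$, $i=2,\dots,n$, with $\phi(0)=0$ and $\phi(r_{0})=1$, yields
\begin{align*}
	\Delta r(x)\le\int_{0}^{r_{0}}\Big((n-1)(\phi')^{2}-\phi^{2}\,Ric(\gamma',\gamma')\Big)\,dt .
\end{align*}
I would then choose Perelman's test function $\phi(t)=t$ on $[0,1]$ and $\phi(t)\equiv1$ on $[1,r_{0}]$, so that the first term contributes exactly $n-1$.

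For the curvature term I would write $Ric(\gamma',\gamma')=Ric_{\varphi}(\gamma',\gamma')-(\varphi\circ\gamma)''\ge\lambda-(\varphi\circ\gamma)''$ and split $\int_{0}^{r_{0}}=\int_{0}^{1}+\int_{1}^{r_{0}}$. On $[1,r_{0}]$, where $\phi\equiv1$, integrating the $(\varphi\circ\gamma)''$ term produces $(\varphi\circ\gamma)'(r_{0})-(\varphi\circ\gamma)'(1)-\lambda(r_{0}-1)$; on $[0,1]$, where $0\le\phi\le1$ and $\gamma$ remains in $\overline{B_{O}(1)}$, I would simply bound $-\phi^{2}Ric(\gamma',\gamma')\le|Ric|\le\max_{\overline{B_{O}(1)}}|Ric|$. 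Subtracting $(\varphi\circ\gamma)'(r_{0})=\langle\nabla\varphi,\nabla r\rangle(x)$ to turn $\Delta r$ into $\Delta_{\varphi}r$ cancels the endpoint term at $t=r_{0}$ and leaves
\begin{align*}
	\Delta_{\varphi}r(x)\le (n-1)+\max_{\overline{B_{O}(1)}}|Ric|-(\varphi\circ\gamma)'(1)-\lambda(r_{0}-1).
\end{align*}
Finally I would control the remaining term by walking back to $O$: from $(\varphi\circ\gamma)'(1)-(\varphi\circ\gamma)'(0)=\int_{0}^{1}(Ric_{\varphi}-Ric)(\gamma',\gamma')\,dt\ge\lambda-\max_{\overline{B_{O}(1)}}|Ric|$ and $(\varphi\circ\gamma)'(0)=\langle\nabla\varphi(O),\gamma'(0)\rangle\ge-|\nabla\varphi(O)|$ one obtains $-(\varphi\circ\gamma)'(1)\le|\nabla\varphi(O)|+\max_{\overline{B_{O}(1)}}|Ric|-\lambda$, and combining this with $-\lambda(r_{0}-1)$ produces the coefficient $-\lambda r_{0}=-\lambda r(x)$ and the stated bound (with, if one tracks it, a harmless absolute constant multiplying $\max_{\overline{B_{O}(1)}}|Ric|$, which is immaterial for the applications).

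The only genuinely delicate point is the behaviour of $r$ at the cut locus, dealt with as usual in the support sense (Calabi's trick); the rest is the bookkeeping that makes the two uses of $Ric\ge\lambda g-\nabla\nabla\varphi$ --- once on $[1,r_{0}]$ and once on the short segment $[0,1]$ --- conspire so that only the single value $|\nabla\varphi(O)|$, rather than a supremum of $|\nabla\varphi|$ over $\overline{B_{O}(1)}$, survives in the final estimate. I do not anticipate any obstacle beyond this.
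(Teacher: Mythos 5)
The paper does not prove this lemma; it is quoted from Perelman \cite{perelman} and \cite[Lemma 18.6]{book3}, so there is no internal proof to compare against. Your argument is the standard one behind those references (index form with Perelman's cutoff $\phi$, conversion of $Ric$ into $Ric_{\varphi}-\nabla\nabla\varphi$, and a second use of the hypothesis on $[0,1]$ to trade $(\varphi\circ\gamma)'(1)$ for $|\nabla\varphi(O)|$), and every step is sound, including the treatment of the cut locus in the barrier sense. The one point to flag is the one you flag yourself: your bookkeeping yields $n-1+|\nabla\varphi(O)|+2\max_{\overline{B_{O}(1)}}|Ric|-\lambda r(x)$, i.e.\ a coefficient $2$ on the curvature term where the lemma states $1$, because you pay $\max_{\overline{B_{O}(1)}}|Ric|$ twice --- once when bounding $-\phi^{2}Ric$ on $[0,1]$ and once when bounding $-(\varphi\circ\gamma)'(1)$. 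Strictly speaking this proves a weaker inequality than the one displayed. The stated constant is recovered by not splitting the interval: integrate $\int_{0}^{r_{0}}\phi^{2}(\varphi\circ\gamma)''\,dt$ by parts to get $(\varphi\circ\gamma)'(r_{0})-\int_{0}^{1}2t\,(\varphi\circ\gamma)'(t)\,dt$, then use $(\varphi\circ\gamma)'(t)\geq(\varphi\circ\gamma)'(0)+t\bigl(\lambda-\max_{\overline{B_{O}(1)}}|Ric|\bigr)$ on $[0,1]$; the resulting coefficient is $\tfrac{2}{3}\max_{\overline{B_{O}(1)}}|Ric|$ and the two $\tfrac{2}{3}\lambda$ contributions cancel, giving exactly $-\lambda r(x)$. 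In any case, the only way the lemma is used in this paper (proof of Theorem \ref{gradbound}) is through absorbing $n-1+|\nabla\varphi(O)|+\max_{\overline{B_{O}(1)}}|Ric|$ into an $R$-independent constant $C$, so your version with the factor $2$ would serve verbatim; but if you present the proof of the lemma as stated, either carry out the integration by parts or restate the conclusion with the constant you actually obtain.
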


We can finally state and proof the main results of this subsection.
\begin{theorem}\label{gradbound}
	Let $M^n$ be a complete manifold for which there are functions $\varphi,v\in C^{\infty}(M)$ and constants $\lambda,\ \mu\in\mathbb{R}$ and $m\in(0,+\infty)$ satisfying $(\ref{eiteiS})$ and $(\ref{neweq2})$. Suppose that $\mu\leq0$. If $\lambda\geq0$, then $v$ is constant. If $\lambda<0$, then $|\nabla v|^2\leq-m\lambda$.
\end{theorem}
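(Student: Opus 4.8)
The plan is to use the localized differential inequalities from Lemma~\ref{lema1}, together with the Laplacian estimate of Lemma~\ref{lemaperel}, and then run a maximum-principle argument on the cutoff functions $G_R$ and $H_R$, letting $R\to+\infty$. First I would treat the cases $\lambda\geq 0$ and $\lambda<0$ in parallel, working with $G_R(x)=F_R(x)|\nabla v|^2$. Since $F_R$ has compact support and $G_R\geq 0$, the function $G_R$ attains a maximum at some point $x_R\in M$; if $G_R\equiv 0$ on the ball $B_O(R)$ we would be done on that ball, so assume $G_R(x_R)>0$, which forces $F_R(x_R)>0$ and also $r(x_R)>1$ for $R$ large (since on $\overline{B_O(1)}$ the quantity $|\nabla v|^2$ is bounded independently of $R$, and we may assume $G_R(x_R)$ exceeds that bound, otherwise $|\nabla v|^2$ is already controlled). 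At $x_R$ we have $\Delta_\varphi G_R(x_R)\leq 0$ and $\nabla G_R(x_R)=0$, so the left-hand side of $(\ref{eqcutgrad})$ is $\leq 0$.

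Next I would estimate the bracketed coefficient on the right-hand side of $(\ref{eqcutgrad})$ at $x_R$. The terms $\eta''/(\eta R^2)$ and $-2(\eta')^2/(\eta^2 R^2)$ are controlled by $(\ref{bound})$: their sum is $\geq -C/R^2$. For the term $\tfrac{\eta'}{\eta R}\Delta_\varphi r$, note $\eta'\leq 0$, so this term is nonnegative once $\Delta_\varphi r\leq 0$; in general I invoke Lemma~\ref{lemaperel} — which applies because $(\ref{eiteiS})$ gives $Ric_\varphi=\lambda g+\tfrac1m dv\otimes dv\geq \lambda g$ — to get $\Delta_\varphi r(x_R)\leq A-\lambda r(x_R)$ for a constant $A$ depending only on $n$, $\lambda$, $|\nabla\varphi(O)|$ and $\max_{\overline{B_O(1)}}|Ric|$. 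Then $\tfrac{\eta'}{\eta R}\Delta_\varphi r\geq \tfrac{\eta'}{\eta R}(A-\lambda r(x_R))$, and since $r(x_R)\leq (1+b)R$ on the support of $\eta'$, the factor $A-\lambda r(x_R)$ is bounded in absolute value by $A+|\lambda|(1+b)R$, while $\eta'/(\eta R)$ — here is the delicate point — is not bounded by itself, but the product with $G_R$ must be compared against the good quadratic term $\tfrac{2}{m\eta}G_R^2$. Thus from $(\ref{eqcutgrad})$ at $x_R$ we get
\begin{align*}
0\geq \left[-\frac{C}{R^2}+\frac{\eta'}{\eta R}\bigl(A-\lambda r(x_R)\bigr)+2\lambda\right]G_R(x_R)+\frac{2}{m\eta(x_R)}G_R(x_R)^2.
\end{align*}
Dividing by $G_R(x_R)>0$ and multiplying by $\eta(x_R)$ yields $\tfrac{2}{m}G_R(x_R)\leq -\eta(x_R)\bigl[-\tfrac{C}{R^2}+2\lambda\bigr]-\tfrac{\eta'}{R}(A-\lambda r(x_R))$.

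The main obstacle, and the step requiring care, is extracting a uniform bound from the term $-\tfrac{\eta'}{R}(A-\lambda r(x_R))$, since $\eta'$ can be of order $1$ (it need not be small). The resolution is that we only need the \emph{product} $F_R(x_R)|\nabla v(x_R)|^2=G_R(x_R)$ to be controlled, and then evaluate at a fixed point: for any fixed $x\in M$ with $r(x)<R$ we have $F_R(x)=1$, hence $|\nabla v(x)|^2=G_R(x)\leq G_R(x_R)$; so it suffices to bound $\limsup_{R\to\infty}G_R(x_R)$. To do so I would choose $\eta$ (equivalently $b$) once and for all so that $|\eta'|\leq C'$ and $\eta$ is bounded below by a positive constant on any fixed compact part of its support, and then observe that as $R\to\infty$ the quantity $\tfrac{1}{R}|A-\lambda r(x_R)|\leq \tfrac{1}{R}(A+|\lambda|(1+b)R)\to |\lambda|(1+b)$ stays bounded, and $-\tfrac{C}{R^2}\to 0$. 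Therefore $\limsup_{R\to\infty}G_R(x_R)\leq \tfrac{m}{2}\bigl(-2\lambda\cdot\liminf\eta(x_R)+C'|\lambda|(1+b)\bigr)$, and — after optimizing the parameter $b$ and noting that the cutoff region can be taken to contribute an arbitrarily small amount, e.g. by iterating with a sequence of cutoffs — this limsup is at most $-m\lambda$. Concretely, one refines the choice of $\eta$ so that the contribution of the transition region $[1,1+b]$ vanishes in the limit (a standard trick: use $\eta$ supported in $[0,2]$, equal to $1$ on $[0,1]$, and rescale), giving $|\nabla v(x)|^2\leq -m\lambda$ for every fixed $x$. When $\lambda\geq 0$ the right-hand side $-m\lambda\leq 0$ forces $|\nabla v|^2\equiv 0$, i.e. $v$ is constant; when $\lambda<0$ we obtain exactly $|\nabla v|^2\leq -m\lambda$. (The analogous argument with $H_R$ and $(\ref{eqcutexp})$ is available but the $G_R$ argument already gives the statement; $H_R$ will be used elsewhere.)
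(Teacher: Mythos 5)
Your setup — maximizing $G_R$, feeding $(\ref{eqcutgrad})$ and Lemma \ref{lemaperel} into the maximum point, and letting $R\to\infty$ — is the same as the paper's, and your treatment of the case $\lambda\geq0$ is correct: there the term $\tfrac{\eta'}{\eta R}(-\lambda r(x_R))$ has a favorable sign, everything else is $O(1/R)$, and the limit forces $v$ to be constant.

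The gap is in the case $\lambda<0$, exactly at the step you flag as delicate. After multiplying by $\eta$, the quantity you must control on the transition region is $-m\lambda\eta(x_R)-\tfrac{m\eta'}{2R}\bigl(A-\lambda r(x_R)\bigr)$, whose dangerous piece is $\tfrac{m|\lambda|}{2}\,|\eta'(t_*)|\,t_*$ with $t_*=r(x_R)/R\in[1,1+b]$. Your term-by-term estimate gives $\limsup_{R}G_R(x_R)\leq -m\lambda+\tfrac{m|\lambda|}{2}\sup|\eta'|\,(1+b)$, and this error \emph{cannot} be made to vanish by "optimizing $b$": any profile descending from $1$ to $0$ over $[1,1+b]$ satisfies $\sup|\eta'|\geq 1/b$, hence $\sup|\eta'|\,(1+b)\geq (1+b)/b>1$, so the additive error is at least $\tfrac{m}{2}|\lambda|$ and you only reach $|\nabla v|^2\leq-\tfrac{3}{2}m\lambda$ at best. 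Your "standard trick" of rescaling a fixed profile supported in $[0,2]$ is worse, not better: it keeps $b=1$ and $\sup|\eta'|\approx1$ fixed, and the reason rescaling fails here (unlike in the usual Yau-type arguments) is that Lemma \ref{lemaperel} only gives $\Delta_{\varphi}r\leq A+|\lambda|\,r$, which grows linearly, so $\tfrac{\eta'}{R}\Delta_{\varphi}r$ is $O(1)$ rather than $o(1)$ on the support of $\eta'$. Obtaining the sharp constant requires exploiting the cancellation between $-m\lambda\eta$ and $\tfrac{m\lambda\eta'}{2R}r(x_R)$ rather than bounding them separately. This is what the paper does: it takes $b=R$ (cutoff supported in a ball of radius $\sim R^2$) with $\eta$ linear of slope $-1/R$ on $[2,1+R]$, so that in the far region $-m\lambda\eta+\tfrac{m\lambda\eta'}{2R}r=-\tfrac{m\lambda(1+R)}{R}+\tfrac{m\lambda}{2R^2}r\leq-m\lambda+\tfrac{C}{R}$ (the two $r$-dependent pieces combine with a good sign because $\lambda<0$), while on the annulus $B_O(2R)\setminus B_O(R)$ the smallness $|\eta'|\leq 2/R$ makes the bad term $O(1/R)$ outright. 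Without this choice (or an equally deliberate profile ensuring $\eta(t)+\tfrac12|\eta'(t)|\,t\leq1+\epsilon$), your argument does not close.
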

\begin{proof}
	Assume that $v$ is not constant, and let $R\geq2$ be large enough so that $G_{R}$ is not constant, and let $x_{R}\in M$ be its maximum. Thus $\Delta_{\varphi}G_{R}\leq0$ and $\nabla G_{R}=0$ at $x_{R}$. Once $G_{R}$ is nonnegative and nonconstant, $G_{R}(x_{R})>0$. Therefore, Lemma \ref{lema1} asserts that the following holds at $x_{R}$
		\begin{align}
			G_{R}(x_{R})&\leq-\frac{m\eta'}{2R}(\Delta_{\varphi}r)(x_{R})-\frac{m\eta''}{2R^2}+\frac{m(\eta')^{2}}{\eta R^{ 2}}-m\lambda\eta\label{line1}\\
			&\leq-\frac{m\eta'}{2R}(\Delta_{\varphi}r)(x_{R})+\frac{mC}{2R^{2}}-m\lambda\eta,\label{line2}
		\end{align}
	where in the second inequality we have used $(\ref{bound})$.
	
	We divide the proof in several cases, following the same rout as \cite[Theorem 27.2]{book4}. We also will use $C$ to denote a positive constant, which does not depend on $R$, and which may change from line to line.
	
	Suppose that $x_{R}\in B_{R}(O)$. In this case $F_{R}$ is constant equals to $1$ around $x_{R}$. Since in this case $\eta'=\eta''=0$, for any $x\in B_{O}(R)$ the inequality $(\ref{line1})$ gives,
	\begin{align}\label{lamneg}
		|\nabla v|^2(x)=G_{R}(x)\leq G_{R}(x_{R})\leq-m\lambda.
	\end{align}

	Now suppose that $x_{R}\notin B_{R}(O)$. Once $R\geq2$, Lemma \ref{lemaperel} allows us to further estimate $(\ref{line2})$ to obtain
	\begin{align}\label{tocarry}
		G_{R}(x_{R})\leq\frac{C}{R}+\frac{C}{R^{2}}-m\lambda\eta+\frac{m\lambda\eta'}{2R}r(x_{R}).
	\end{align}
	
	If $\lambda\geq0$, then $(\ref{lamneg})$ and the assumption that $v$ is not constant imply that $x_{R}\notin B_{R}(O)$, for $R$ sufficiently large. Furthermore, given $x\in B_{R}(O)$, estimate $(\ref{tocarry})$ implies that
	\begin{align*}
		|\nabla v|^2(x)=G_{R}(x)\leq G_{R}(x_{R})\leq\frac{C}{R}+\frac{C}{R^{2}}.
	\end{align*}
	Taking $R\rightarrow+\infty$, we obtain that $v$ is constant. This is a contradiction, which means that when $\lambda\geq0$, we could not have taken $v$ nonconstant.
	
	Now we consider the case $\lambda<0$. In addition to the properties that we have already asked $\eta$ to satisfy, we assume further that $b=R$, that
	\begin{align*}
		\eta(t)=
		\left\{ 
		\begin{array}[pos]{lll}
			1,\ t\in[0,1]\\\noalign{\smallskip}
			\frac{1+R-t}{R},\ t\in[2,1+R)\\\noalign{\smallskip}
			0,\ t\in[1+R,+\infty)
		\end{array}
		\right.
	\end{align*}
	and also that, for any $t\in(0,1+R)$,
	\begin{align}\label{condicondi}
		-\frac{2}{R}\leq\eta'\leq0\ \ \ \ \text{and}\ \ \ \ \ |\eta''|\leq C
	\end{align}
	for a universal positive constant $C$. The non smoothness of $\eta$ at $1+R$ is not a problem, once $\eta(1+R)=0$, and we are interested in $x_{R}$, that satisfy $F_{R}(x_{R})>0$.
	
	As we have already seen, when $x_{R}\in B_{O}(R)$ we have $(\ref{lamneg})$, and thus the result follows. Therefore, we may assume that $x_{R}\notin B_{O}(R)$.
	
	If $x_{R}\in B_{O}(R(1+R))- B_{O}(2R)$, once $\eta(t)=\frac{1+R-t}{R}$ and $\eta'(t)=-\frac{1}{R}$ for $t\in[2,1+R)$, we get from $(\ref{tocarry})$ that
	\begin{align}
		G_{R}(x_{R})&\leq\frac{C}{R}+\frac{C}{R^{2}}-\frac{m\lambda(1+R)}{R}+\frac{m\lambda}{R^2}r(x_{R})-\frac{m\lambda}{2R^2}r(x_{R})\nonumber\\
		&\leq\frac{C}{R}+\frac{C}{R^{2}}-m\lambda+\frac{m\lambda}{2R^2}r(x_{R})\label{case1.1}\\
		&\leq\frac{C}{R}+\frac{C}{R^{2}}-m\lambda\nonumber
	\end{align}
	If $x_{R}\in B_{O}(2R)-B_{O}(R)$, then $R\leq r(x_{R})\leq2R$. Using $\eta\leq1$ and $(\ref{condicondi})$, we get from $(\ref{tocarry})$ that
	\begin{align}
		G_{R}(x_{R})&\leq\frac{C}{R}+\frac{C}{R^{2}}-m\lambda\eta-\frac{m\lambda}{2R^2}r(x_{R})\nonumber\\
		&\leq\frac{C}{R}+\frac{C}{R^{2}}-m\lambda-\frac{m\lambda}{R}\label{case1.2}\\
		&\leq\frac{C}{R}+\frac{C}{R^{2}}-m\lambda\nonumber
	\end{align}
	
	From $(\ref{lamneg})$, $(\ref{case1.1})$ and $(\ref{case1.2})$, it follows that for any $x\in B_{O}(r)$,
	\begin{align*}
		|\nabla v|^2(x)=G_{R}(x)\leq G_{R}(x_{R})\leq\frac{C}{R}+\frac{C}{R^{2}}-m\lambda.
	\end{align*}
	Taking $R\rightarrow+\infty$, we get $|\nabla v|^2(x)\leq-m\lambda$, which finishes that proof.
\end{proof}

If we repeat the same arguments as above when $\lambda<0$ and $\mu<0$, replacing $(\ref{eqcutgrad})$ with $(\ref{eqcutexp})$, we obtain the following theorem.
\begin{theorem}\label{negneg}
	Let $M^n$ be a manifold for which there are functions $\varphi,v\in C^{\infty}(M)$ and constants $\lambda,\ \mu\in\mathbb{R}$ and $m\in(0,+\infty)$ satisfying $(\ref{eiteiS})$ and $(\ref{neweq2})$. Suppose that $\lambda<0$ and $\mu<0$. Then $\displaystyle v\leq\frac{m}{2}\ln(\lambda/\mu)$.
\end{theorem}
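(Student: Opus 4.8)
The plan is to repeat, essentially verbatim, the proof of Theorem~\ref{gradbound}, now using the cutoff function $H_{R}=F_{R}e^{\frac{2}{m}v}$ and the inequality $(\ref{eqcutexp})$ in place of $G_{R}=F_{R}|\nabla v|^{2}$ and $(\ref{eqcutgrad})$. First I would dispose of the degenerate case: if $v$ is constant, then $(\ref{neweq2})$ forces $0=-m\lambda+m\mu e^{\frac{2}{m}v}$, whence $e^{\frac{2}{m}v}=\lambda/\mu$ and $v=\frac{m}{2}\ln(\lambda/\mu)$, so the conclusion holds with equality (note $\lambda/\mu>0$ since $\lambda<0$ and $\mu<0$). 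So assume $v$ nonconstant. Then, for each large $R\geq 2$, the nonnegative function $H_{R}$ is nonconstant and has compact support contained in $\overline{B_{O}((1+R)R)}$, hence ($M$ being complete, as in Theorem~\ref{gradbound}) it attains its maximum at some $x_{R}$ with $H_{R}(x_{R})>0$; there $\nabla H_{R}=0$, $\Delta_{\varphi}H_{R}\leq0$ and $F_{R}(x_{R})>0$, so $(\ref{eqcutexp})$ applies.

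Next I would evaluate $(\ref{eqcutexp})$ at $x_{R}$: the left-hand side is $\leq0$, so $0\geq[\,\cdots\,]H_{R}(x_{R})-\frac{2\mu}{\eta}H_{R}(x_{R})^{2}$, where $[\,\cdots\,]$ is the bracketed coefficient. Dividing by $H_{R}(x_{R})>0$ and then by the \emph{negative} quantity $2\mu/\eta$ (this is where $\mu<0$ enters, and it reverses the inequality) gives $H_{R}(x_{R})\leq\frac{1}{2\mu}\left[\frac{\eta''}{R^{2}}-\frac{2(\eta')^{2}}{\eta R^{2}}+\frac{\eta'}{R}\Delta_{\varphi}r+2\lambda\eta\right]$ at $x_{R}$. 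Since $\frac{1}{2\mu}<0$, to bound $H_{R}(x_{R})$ from above I must bound this bracket from \emph{below}. For the first two terms I use $(\ref{bound})$, getting $\frac{\eta''}{R^{2}}-\frac{2(\eta')^{2}}{\eta R^{2}}\geq-\frac{C}{R^{2}}$. For the term with $\Delta_{\varphi}r$: equation $(\ref{eiteiS})$ gives $Ric_{\varphi}=\lambda g+\frac{1}{m}dv\otimes dv\geq\lambda g$, so Lemma~\ref{lemaperel} applies whenever $r(x_{R})>1$, i.e. once $x_{R}\notin B_{O}(R)$; and since $\eta'\leq0$, multiplying the estimate $\Delta_{\varphi}r\leq n-1+|\nabla\varphi(O)|+\max_{\overline{B_{O}(1)}}|Ric|-\lambda r(x_{R})$ by $\eta'/R\leq0$ reverses it and yields a lower bound for $\frac{\eta'}{R}\Delta_{\varphi}r$. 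This produces the analogue of $(\ref{tocarry})$.

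Then I would run the same case split as in Theorem~\ref{gradbound} with the same choice of $\eta$ ($b=R$; $\eta$ linear on $[2,1+R]$). If $x_{R}\in B_{O}(R)$ then $\eta\equiv1$ and $\eta'=\eta''=0$ near $x_{R}$, the bracket equals $2\lambda$, and $H_{R}(x_{R})\leq\lambda/\mu$ at once. If $x_{R}\in B_{O}(2R)-B_{O}(R)$ I use $\eta\leq1$ and $(\ref{condicondi})$ exactly as in the derivation of $(\ref{case1.2})$. If $x_{R}\in B_{O}((1+R)R)-B_{O}(2R)$, then $\eta(t)=\frac{1+R-t}{R}$, $\eta'(t)=-\frac{1}{R}$, so $2\lambda\eta=\frac{2\lambda(1+R)}{R}-\frac{2\lambda r(x_{R})}{R^{2}}$; combining this with the $r(x_{R})$-term coming from Lemma~\ref{lemaperel} as in the derivation of $(\ref{case1.1})$, and using $\lambda<0$ to discard the leftover nonnegative multiple of $r(x_{R})/R^{2}$, gives the required lower bound for the bracket. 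In all three cases one arrives at $H_{R}(x_{R})\leq\frac{\lambda}{\mu}+\frac{C}{R}+\frac{C}{R^{2}}$ with $C$ independent of $R$.

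Finally, for any fixed $x\in M$ and any $R\geq r(x)$ one has $F_{R}(x)=1$, hence $e^{\frac{2}{m}v(x)}=H_{R}(x)\leq H_{R}(x_{R})\leq\frac{\lambda}{\mu}+\frac{C}{R}+\frac{C}{R^{2}}$; letting $R\to+\infty$ gives $e^{\frac{2}{m}v(x)}\leq\lambda/\mu$, that is $v(x)\leq\frac{m}{2}\ln(\lambda/\mu)$, which is the claim. The main obstacle is precisely the sign bookkeeping forced by $\mu<0$: unlike in Theorem~\ref{gradbound}, one needs a \emph{lower} bound for the bracketed coefficient rather than an upper bound, so Lemma~\ref{lemaperel} is used ``with the opposite sign'', and in the linear region one must check carefully that this lower bound still converges to $2\lambda$ (and not to a larger constant) as $R\to+\infty$, so that the limiting estimate is exactly $\lambda/\mu$.
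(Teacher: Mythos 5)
Your proposal is correct and follows exactly the route the paper takes: the paper's entire proof of this theorem is the remark that one repeats the argument of Theorem \ref{gradbound} with $H_{R}=F_{R}e^{\frac{2}{m}v}$ and inequality $(\ref{eqcutexp})$ in place of $G_{R}$ and $(\ref{eqcutgrad})$, which is what you carry out, with the sign reversals forced by $\mu<0$ handled correctly in each of the three regions.
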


\subsection{The first eigenvalue of $-\Delta_{\varphi}$}\label{fujita'sargument}
Let $(M,g,e^{-\varphi}dV)$ be a weighted Riemannian manifold, where $\varphi\in C^{\infty}(M)$. Let $\Omega\subset M$ be an open bounded domain with smooth boundary. The weighted laplacian acting on $C^{\infty}(\Omega)$ is the operator $\Delta_{\varphi}$ defined by
\begin{align}\label{weightedlap}
	\Delta_{\varphi}u=\Delta u-\langle\nabla\varphi,\nabla u\rangle,\ u\in C^{\infty}(\Omega).
\end{align}
If  $u\in C^{\infty}(\Omega)$ and  $\psi\in C_{0}^{\infty}(\Omega)$, this operator satisfies the identity
\begin{align}\label{byparts}
	\int_{\Omega}\psi\Delta_{\varphi}ue^{-\varphi}dV=\int_{\Omega}u\Delta_{\varphi}\psi e^{-\varphi}dV+\int_{\partial\Omega}u\frac{\partial\psi}{\partial\nu}e^{-\varphi}dS,
\end{align}
where $e^{-\varphi}dS$ is the area element induced in $\partial\Omega$ and $\nu$ is the inward unit normal of $\partial\Omega$.

 In this section we deal with the solutions of following nonlinear problem
\begin{align}\label{mainprob}
	\left\{ 
	\begin{array}[pos]{ll}
		-\Delta_{\varphi}u=c_{1}+c_{2}e^{u},\ \text{in $\Omega$}\\
		u\geq0,\ \text{in $\overline{\Omega}$},
	\end{array}
	\right.
\end{align}
for given constants $c_{1}$ and $c_{2}$.

In what follows, denote by $\psi$ the eigenfunction associated to the smallest positive eigenvalue of the Dirichlet problem for $-\Delta_{\varphi}$, denoted by $\lambda_{1}$. Thus,
\begin{align}\label{eigenvrpob}
	\left\{ 
	\begin{array}[pos]{ll}
		-\Delta_{\varphi}\psi=\lambda_{1}\psi,\ \text{in $\Omega$}\\
		\psi=0,\ \text{in}\ \partial\Omega.
	\end{array}
	\right.
\end{align}
Let us assume that $\psi>0$ in $\Omega$ and normalize $\psi$ so that $\displaystyle\int_{\Omega}\psi e^{-\varphi}dV=1$. We also recall that, according to Hopf's Maximum Principle,
\begin{align}\label{inwardderpos}
\frac{\partial\psi}{\partial\nu}(p)>0,\ p\in\partial\Omega.
\end{align}

Below we prove the main result of this subsection, which takes advantages of the convexity of the nonlinearity in $(\ref{eigenvrpob})$.

\begin{theorem}\label{integlemma}
	Suppose that $u$ is a nonconstant solution of $(\ref{mainprob})$ so that $\displaystyle\int_{\Omega}u\psi e^{-\varphi}dV>0$.
	\begin{enumerate}
		\item If $c_{2}>0$ and $\displaystyle\lambda_{1}\inf_{\partial\Omega}{u}+c_{1}+c_{2}\geq0$, then $\lambda_{1}(\overline{\Omega})>c_{2}$.
		\item If $c_{2}\leq0$, then $\displaystyle\sup_{\partial\Omega}{u}>-\frac{c_{1}+c_{2}}{\lambda_{1}(\overline{\Omega})}$.
	\end{enumerate}
\end{theorem}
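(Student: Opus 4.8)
The plan is to test the equation in $(\ref{mainprob})$ against the first Dirichlet eigenfunction $\psi$ by means of the Green-type identity $(\ref{byparts})$, and then to feed the resulting integral identity into Jensen's inequality for the probability measure $\psi e^{-\varphi}dV$ on $\Omega$ (recall $\int_{\Omega}\psi e^{-\varphi}dV=1$), exploiting the convexity of $t\mapsto e^{t}$.

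First I record the auxiliary identity $\int_{\partial\Omega}\frac{\partial\psi}{\partial\nu}e^{-\varphi}dS=\lambda_{1}$; this follows by taking $u\equiv 1$ in $(\ref{byparts})$ (equivalently, by integrating $(\ref{eigenvrpob})$ against $e^{-\varphi}dV$). Substituting $-\Delta_{\varphi}u=c_{1}+c_{2}e^{u}$ and $-\Delta_{\varphi}\psi=\lambda_{1}\psi$ into $(\ref{byparts})$ and using the normalization of $\psi$ gives the master identity
\begin{align*}
	\lambda_{1}\int_{\Omega}u\,\psi\,e^{-\varphi}dV=c_{1}+c_{2}\int_{\Omega}e^{u}\psi\,e^{-\varphi}dV+\int_{\partial\Omega}u\,\frac{\partial\psi}{\partial\nu}\,e^{-\varphi}dS.
\end{align*}
Set $A:=\int_{\Omega}u\,\psi\,e^{-\varphi}dV$, which is positive by hypothesis. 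Jensen's inequality applied to the probability measure $\psi e^{-\varphi}dV$ yields $\int_{\Omega}e^{u}\psi\,e^{-\varphi}dV\geq e^{A}$, and this is strict because $u$ is nonconstant and $\psi>0$ on $\Omega$; combined with the elementary bound $e^{A}\geq 1+A$, this controls the exponential term.

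For item 1, since $c_{2}>0$ I keep the Jensen inequality and multiply by $c_{2}$, while Hopf's Maximum Principle $(\ref{inwardderpos})$ gives $\frac{\partial\psi}{\partial\nu}>0$ on $\partial\Omega$, so the boundary integral is bounded below by $\lambda_{1}\inf_{\partial\Omega}u$. Plugging these into the master identity and invoking the hypothesis $\lambda_{1}\inf_{\partial\Omega}u+c_{1}+c_{2}\geq 0$ produces the strict inequality $\lambda_{1}A>c_{2}A$, and dividing by $A>0$ gives $\lambda_{1}>c_{2}$. For item 2, since $c_{2}\leq 0$ multiplying the Jensen inequality (and $e^{A}\geq 1+A$) by $c_{2}$ reverses the estimate, and the boundary integral is instead bounded above by $\lambda_{1}\sup_{\partial\Omega}u$; the master identity then gives $(\lambda_{1}-c_{2})A\leq c_{1}+c_{2}+\lambda_{1}\sup_{\partial\Omega}u$, and since $\lambda_{1}-c_{2}\geq\lambda_{1}>0$ and $A>0$ the left-hand side is strictly positive, which rearranges to $\sup_{\partial\Omega}u>-(c_{1}+c_{2})/\lambda_{1}$.

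I do not expect a serious obstacle; the delicate points are essentially bookkeeping. The two cases require the Jensen estimate and the boundary estimate to be used in opposite directions according to the sign of $c_{2}$, and one must be careful that the final inequalities come out strict — which is ensured in item 1 by the strictness of Jensen (valid since $u$ is genuinely nonconstant and $\psi e^{-\varphi}dV$ has full support in $\Omega$) and in item 2 by the strict positivity of $(\lambda_{1}-c_{2})A$. The only structural facts used are $(\ref{byparts})$, the normalization and positivity of $\psi$ in $\Omega$, and $(\ref{inwardderpos})$.
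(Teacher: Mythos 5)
Your proposal is correct and follows essentially the same route as the paper's proof: testing against the first Dirichlet eigenfunction via the identity $(\ref{byparts})$, applying Jensen's inequality with the probability measure $\psi e^{-\varphi}dV$ together with $e^{x}\geq x+1$, and controlling the boundary term through Hopf's lemma and $\int_{\partial\Omega}\frac{\partial\psi}{\partial\nu}e^{-\varphi}dS=\lambda_{1}$. The only (harmless) difference is organizational: you extract strictness in item 2 from the positivity of $(\lambda_{1}-c_{2})A$ rather than from strict Jensen, which in fact handles the case $c_{2}=0$ slightly more cleanly.
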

\begin{proof}
	Let us consider $\displaystyle x_{0}=\int_{\Omega}u\psi e^{-\varphi}dV$. Multiplying $(\ref{mainprob})$ by $\psi$, integrating by parts as in $(\ref{byparts})$ and using $(\ref{eigenvrpob})$ one has
	\begin{align}\label{stokesNpde}
		c_{1}+c_{2}\int_{\Omega}e^{u}\psi e^{-\varphi}dV&=\int_{\Omega}(c_{1}+c_{2}e^{u})\psi e^{-\varphi}dV\nonumber\\
		&=-\int_{\Omega}u\Delta_{\varphi}\psi e^{-\varphi}dV-\int_{\partial\Omega}u\frac{\partial\psi}{\partial\nu}e^{-\varphi}dS\\
		&=\lambda_{1}\int_{\Omega}u\psi e^{-\varphi}dV-\int_{\partial\Omega}u\frac{\partial\psi}{\partial\nu}e^{-\varphi}dS\nonumber.
	\end{align}
	Now we consider the two cases.
	\begin{enumerate}
		\item Suppose $c_{2}>0$. Using $\displaystyle\int_{\Omega}\psi e^{-\varphi}dV=1$, $\displaystyle\lambda_{1}=\int_{\partial\Omega}\frac{\partial\psi}{\partial\nu}e^{-\varphi}dV$, Jessen's inequality and $(\ref{inwardderpos})$ we get
		\begin{align*}
			\begin{split}
				c_{1}+c_{2}e^{\int_{\Omega}u\psi e^{-\varphi}dV}&\leq c_{1}+c_{2}\int_{\Omega}\psi e^{u}e^{-\varphi}dV\\
				&=\lambda_{1}\int_{\Omega}u\psi e^{-\varphi}dV-\int_{\partial\Omega}u\frac{\partial\psi}{\partial\nu}e^{-\varphi}dS\\
				&\leq\lambda_{1}\int_{\Omega}u\psi e^{-\varphi}dV-\lambda_{1}\inf_{\partial\Omega}{u},
			\end{split}
		\end{align*}
	where equality holds if and only if $u$ is constant in $\overline{\Omega}$. Since we are assuming $u$ not constant in $\overline{\Omega}$, strict inequality holds in $(\ref{comput})$, that is
	\begin{align}\label{comput}
		\begin{split}
			\lambda_{1}x_{0}-c_{2}e^{x_{0}}-c_{1}-\lambda_{1}\inf_{\partial\Omega}{u}>0.
		\end{split}
	\end{align}
	Using the inequality $e^{x}\geq x+1$, we get
	\begin{align*}
		&(c_{2}-\lambda_{1})x_{0}<-c_{2}-c_{1}-\lambda_{1}\inf_{\partial\Omega}{u}.
	\end{align*}
	Once we are assuming $x_{0}>0$, if $\displaystyle\lambda_{1}\inf_{\partial\Omega}{u}+c_{1}+c_{2}\geq0$, then we get
	\begin{align}
		\lambda_{1}(\overline{\Omega})>c_{2},
	\end{align}
	as we claimed.
		\item Suppose $c_{2}\leq0$. Using $(\ref{stokesNpde})$ and Jessen's inequality as before, we get
		\begin{align}\label{comput2}
			\begin{split}
				-\lambda_{1}x_{0}+c_{2}e^{x_{0}}+c_{1}+\lambda_{1}\sup_{\partial\Omega}{u}\geq0,
			\end{split}
		\end{align}
		where equality holds if and only if $u$ is constant in $\overline{\Omega}$. If $u$ is not constant in $\overline{\Omega}$, then strict inequality holds in $(\ref{comput2})$, which by using $e^{x}\geq x+1$, gives
		\begin{align*}
			0<(\lambda_{1}-c_{2})x_{0}<c_{2}+c_{1}+\lambda_{1}\sup_{\partial\Omega}{u},
		\end{align*}
		and once we are assuming $x_{0}>0$\, we get $\lambda_{1}\sup_{\overline{\Omega}}{u}+c_{1}+c_{2}>0$. Therefore,
		\begin{align}
			\sup_{\overline{\Omega}}{u}>-\frac{c_{1}+c_{2}}{\lambda_{1}(\overline{\Omega})}.
		\end{align}
	\end{enumerate}
\end{proof}

According to whether $\lambda_{1}(M)$ vanishes, we have the following.

\begin{corollary}\label{restriction2tosolution}
	Let $M^n$ be a complete manifold and suppose that for given constants $c_{1},\ c_{2}\in\mathbb{R}$, there is a nonnegative function $u\in C^{2}(M)$ satisfying
	\begin{align}\label{mainprob2}
			-\Delta_{\varphi}u=c_{1}+c_{2}e^{u}.
	\end{align}
	\begin{enumerate}
		\item If $\lambda_{1}(-\Delta_{\varphi})=0$, then $u$ is constant provided one of the following happens:
		\begin{enumerate}
			\item $c_{2}>0$ and $c_{1}+c_{2}\geq0$, or
			\item $c_{2}\leq0$ and $c_{1}+c_{2}<0$. %$\left(\textbf{Aqui\ usa-se\ que\ }\displaystyle\sup_{p\in \overline{\Omega}}{u(p)}\geq\sup_{p\in\partial\Omega}{u(p)}>-\frac{c_{1}+c_{2}}{\lambda_{1}(-\Delta_{\varphi})} \right)$
		\end{enumerate}
		\item If $\lambda_{1}(-\Delta_{\varphi})>0$, then $u$ is constant provided ne of the following happens:
		\begin{enumerate}
			\item $c_{1}\geq0$, $c_{2}>0$ and $\lambda_{1}(-\Delta_{\varphi})\leq c_{2}$, or
			\item $c_{2}\leq0$, $c_{1}+c_{2}\leq0$ and $\displaystyle\sup_{M}{u}\leq-\frac{c_{1}+c_{2}}{\lambda_{1}(-\Delta_{\varphi})}$.
		\end{enumerate}
	\end{enumerate}
\end{corollary}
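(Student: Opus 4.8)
The plan is to bootstrap the global statement from its bounded-domain version, Theorem \ref{integlemma}, by exhausting $M$. I would fix an exhaustion $\{\Omega_{k}\}_{k\geq1}$ of $M$ by bounded open sets with smooth boundary, with $\overline{\Omega_{k}}\subset\Omega_{k+1}$ and $\bigcup_{k}\Omega_{k}=M$, write $\lambda_{1}(\overline{\Omega_{k}})$ for the first Dirichlet eigenvalue of $-\Delta_{\varphi}$ on $\Omega_{k}$ and $\psi_{k}>0$ for a corresponding eigenfunction normalized by $\int_{\Omega_{k}}\psi_{k}e^{-\varphi}dV=1$. The single global input beyond Theorem \ref{integlemma} is the standard domain-monotonicity fact that $\lambda_{1}(\overline{\Omega_{k}})$ is nonincreasing and $\lambda_{1}(\overline{\Omega_{k}})\to\lambda_{1}(-\Delta_{\varphi})$, the bottom of the weighted spectrum. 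Arguing by contradiction, I would assume $u$ nonconstant; then for $k$ large $u|_{\Omega_{k}}$ is nonconstant, and since $u\geq0$ and $u\not\equiv0$ one has $\int_{\Omega_{k}}u\psi_{k}e^{-\varphi}dV>0$, so $u|_{\Omega_{k}}$ fits the hypotheses of Theorem \ref{integlemma}.

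For the subcases with $c_{2}>0$ — that is 1.(a) and 2.(a) — I would check the sign hypothesis of item 1 of Theorem \ref{integlemma}: since $u\geq0$ gives $\inf_{\partial\Omega_{k}}u\geq0$, since $\lambda_{1}(\overline{\Omega_{k}})>0$, and since $c_{1}+c_{2}\geq0$ (assumed outright in 1.(a); forced by $c_{1}\geq0,\ c_{2}>0$ in 2.(a)), we get $\lambda_{1}(\overline{\Omega_{k}})\inf_{\partial\Omega_{k}}u+c_{1}+c_{2}\geq0$. Item 1 of Theorem \ref{integlemma} then yields $\lambda_{1}(\overline{\Omega_{k}})>c_{2}$ for all large $k$, and letting $k\to\infty$ gives $\lambda_{1}(-\Delta_{\varphi})\geq c_{2}>0$. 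This contradicts $\lambda_{1}(-\Delta_{\varphi})=0$ in case 1.(a) and $\lambda_{1}(-\Delta_{\varphi})\leq c_{2}$ in case 2.(a), so $u$ must be constant.

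For the subcases with $c_{2}\leq0$ — that is 1.(b) and 2.(b) — I would first note that $u\geq0$ gives $e^{u}\geq1$, hence $\Delta_{\varphi}u=-(c_{1}+c_{2}e^{u})\geq-(c_{1}+c_{2})\geq0$, so $u$ is $\varphi$-subharmonic; consequently $\sup_{\overline{\Omega_{k}}}u=\sup_{\partial\Omega_{k}}u$ and this quantity increases to $\sup_{M}u$ as $k\to\infty$. Item 2 of Theorem \ref{integlemma} gives $\sup_{\partial\Omega_{k}}u>-(c_{1}+c_{2})/\lambda_{1}(\overline{\Omega_{k}})$ for all large $k$. In case 1.(b), $c_{1}+c_{2}<0$ together with $\lambda_{1}(\overline{\Omega_{k}})\to0$ makes the right-hand side blow up, forcing $\sup_{M}u=+\infty$; in case 2.(b), passing to the limit yields $\sup_{M}u\geq-(c_{1}+c_{2})/\lambda_{1}(-\Delta_{\varphi})$, clashing with the standing hypothesis $\sup_{M}u\leq-(c_{1}+c_{2})/\lambda_{1}(-\Delta_{\varphi})$. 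In either case one is driven to a contradiction, so $u$ is constant.

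The main obstacle, as usual in this passage from a domain to the whole manifold, will be the limit step rather than the algebra: establishing $\lambda_{1}(\overline{\Omega_{k}})\searrow\lambda_{1}(-\Delta_{\varphi})$ and, in the cases $c_{2}\leq0$, controlling the boundary supremum via $\sup_{\partial\Omega_{k}}u\nearrow\sup_{M}u$ (which is exactly where $\varphi$-subharmonicity of $u$ is used). Verifying the elementary sign hypotheses of Theorem \ref{integlemma} on each $\Omega_{k}$ is immediate from $u\geq0$ and the hypotheses on $c_{1},c_{2}$.
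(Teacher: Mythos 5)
Your exhaustion strategy is the natural way to pass from Theorem \ref{integlemma} to this corollary --- the paper itself offers no written proof, presenting the corollary as an immediate consequence of that theorem --- and your argument is complete and correct for case 1.(a), which is the only case the paper actually invokes later (in Theorem \ref{posiposi}, with $c_{1}=-2\lambda$, $c_{2}=2\lambda$). Your verification of the sign hypothesis via $u\geq0$, the positivity of $\int_{\Omega_{k}}u\psi_{k}e^{-\varphi}dV$ once $u|_{\Omega_{k}}$ is nonconstant, and the monotone convergence $\lambda_{1}(\overline{\Omega_{k}})\searrow\lambda_{1}(-\Delta_{\varphi})$ are all fine.

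The remaining three cases do not close as written. In cases 2.(a) and 2.(b) the limit step destroys the strictness you need: from $\lambda_{1}(\overline{\Omega_{k}})>c_{2}$ for all large $k$ you only obtain $\lambda_{1}(-\Delta_{\varphi})\geq c_{2}$, which is compatible with the hypothesis $\lambda_{1}(-\Delta_{\varphi})\leq c_{2}$ (the borderline $\lambda_{1}=c_{2}$ is not excluded); likewise from $\sup_{\partial\Omega_{k}}u>-(c_{1}+c_{2})/\lambda_{1}(\overline{\Omega_{k}})$ you only obtain $\sup_{M}u\geq-(c_{1}+c_{2})/\lambda_{1}(-\Delta_{\varphi})$, compatible with the hypothesis $\sup_{M}u\leq-(c_{1}+c_{2})/\lambda_{1}(-\Delta_{\varphi})$. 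So no contradiction is reached in the equality cases; your argument proves 2.(a) and 2.(b) only under the strict hypotheses $\lambda_{1}<c_{2}$ and $\sup_{M}u<-(c_{1}+c_{2})/\lambda_{1}$. More seriously, in case 1.(b) the conclusion $\sup_{M}u=+\infty$ is not by itself a contradiction: nothing in the hypotheses of 1.(b) bounds $u$ from above, and you give no reason why an unbounded solution is impossible. Indeed, taking $M=\mathbb{R}^{n}$ with $\varphi\equiv0$, $c_{2}=0$, $c_{1}=-2n$ and $u(x)=|x|^{2}$ gives a nonnegative, nonconstant solution of $-\Delta_{\varphi}u=c_{1}+c_{2}e^{u}$ with $\lambda_{1}(-\Delta)=0$ and $c_{1}+c_{2}<0$, so case 1.(b) as stated cannot be proved without an additional hypothesis (such as $\sup_{M}u<+\infty$); the defect there lies in the statement rather than only in your argument. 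None of this affects the one application of the corollary made in the paper.
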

Applying this corollary to complete noncompact warped product gradient Ricci solitons we get.

\begin{theorem}\label{posiposi}
	Let $M^n$ be a manifold for which there are functions $\varphi,v\in C^{\infty}(M)$ and constants $\lambda,\ \mu\in\mathbb{R}$ and $m\in(0,+\infty)$ satisfying $(\ref{eiteiS})$ and $(\ref{neweq2})$. If $\lambda>0$, $\mu>0$ and $\displaystyle v\geq \frac{m}{2}\ln(\lambda/\mu)$, then $v$ is constant.
\end{theorem}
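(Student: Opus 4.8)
The plan is to read off Theorem~\ref{posiposi} from Corollary~\ref{restriction2tosolution}(1a) after a change of the unknown function.

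First I would introduce $u := \dfrac{2}{m}\,v - \ln(\lambda/\mu)$, which is smooth on $M$; the hypothesis $v \ge \tfrac{m}{2}\ln(\lambda/\mu)$ is exactly the statement that $u \ge 0$ everywhere. Differentiating and substituting $(\ref{neweq2})$,
\[
\Delta_{\varphi}u \;=\; \frac{2}{m}\,\Delta_{\varphi}v \;=\; 2\lambda - 2\mu\,e^{\frac{2}{m}v},
\]
and since $e^{\frac{2}{m}v} = (\lambda/\mu)\,e^{u}$ the right-hand side equals $2\lambda(1-e^{u})$. Hence $u$ is a nonnegative solution of $-\Delta_{\varphi}u = c_1 + c_2 e^{u}$ as in $(\ref{mainprob2})$, with $c_1 = -2\lambda$ and $c_2 = 2\lambda$.

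Because $\lambda > 0$ we have $c_2 = 2\lambda > 0$ and $c_1 + c_2 = 0 \ge 0$, so the sign hypotheses of item 1(a) of Corollary~\ref{restriction2tosolution} hold, and the only remaining point is to check that $\lambda_1(-\Delta_{\varphi}) = 0$. For this I would use that $(\ref{eiteiS})$ gives $Ric_{\varphi} \ge \lambda g$ with $\lambda > 0$; this forces the weighted volume $\int_{M} e^{-\varphi}\,dV$ to be finite (a Morgan-type consequence of a positive lower bound on the Bakry-Emery tensor; if $M$ is compact there is nothing to prove), and then feeding the cutoffs $F_R$ of Lemma~\ref{lema1} --- which equal $1$ on $B_R(O)$ and have $|\nabla F_R| = O(1/R)$ --- into the Rayleigh quotient of $-\Delta_{\varphi}$ gives $\int_M |\nabla F_R|^2 e^{-\varphi}\,dV \to 0$ while $\int_M F_R^{2} e^{-\varphi}\,dV \to \int_M e^{-\varphi}\,dV > 0$; hence $\lambda_1(-\Delta_{\varphi}) = 0$.

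With $\lambda_1(-\Delta_{\varphi}) = 0$ in hand, Corollary~\ref{restriction2tosolution}(1a) gives that $u$ is constant, so $v = \tfrac{m}{2}\big(u + \ln(\lambda/\mu)\big)$ is constant, which is the assertion. (Equivalently, one could observe that $u \ge 0$ makes $\Delta_{\varphi}u = 2\lambda(1-e^{u}) \le 0$, so that $u$ is a nonnegative $\varphi$-superharmonic function, and finiteness of the weighted volume makes $(M, e^{-\varphi}\,dV)$ parabolic, forcing $u$ constant; but the eigenvalue route is the one the preceding development is tailored for.) The only substantive step is the vanishing of $\lambda_1(-\Delta_{\varphi})$ --- equivalently, the finiteness of the weighted volume --- which, unlike for $m$-quasi-Einstein metrics, cannot be obtained here from a Laplacian comparison theorem; everything else is a routine substitution together with a bookkeeping of the signs of $c_1$ and $c_2$.
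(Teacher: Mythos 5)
Your proposal is correct and follows essentially the same route as the paper: the same substitution $u=\tfrac{2}{m}v-\ln(\lambda/\mu)$ yielding $c_{1}=-2\lambda$, $c_{2}=2\lambda$, the same appeal to $Ric_{\varphi}\geq\lambda g$ with $\lambda>0$ to get finite weighted volume and hence $\lambda_{1}(-\Delta_{\varphi})=0$, and the same application of Corollary~\ref{restriction2tosolution}(1a). The only cosmetic difference is that you sketch the Rayleigh-quotient argument for $\lambda_{1}=0$ where the paper simply cites Setti's Proposition 3.1.
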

\begin{proof}
	By hypothesis we have $\lambda>0$, $\mu>0$ and
	\begin{align*}
		\displaystyle\inf_{M}v\geq\frac{m}{2}\ln(\lambda/\mu)=\vcentcolon\frac{m}{2}v_{0}.
	\end{align*}
	Once $v$ satisfies $(\ref{neweq2})$, the function $u=\displaystyle\frac{2}{m}v-v_{0}$ satisfies $(\ref{mainprob})$ all over $M$, for the constants
	\begin{align}\label{constants}
		\begin{split}
			&c_{1}=-2\lambda,\ \ \ \text{and}\ \ \ c_{2}=2\lambda.
		\end{split}
	\end{align}
	These choices imply that $c_{1}+c_{2}=0$, and thus
	\begin{align*}
		\displaystyle\lambda_{1}\inf_{p\in\overline{\Omega}}{u(p)}+c_{1}+c_{2}\geq0.
	\end{align*}
	
	On the other hand, it follows from $(\ref{eiteiS})$ that $Ric_{\varphi}\geq\lambda g$, from where the $\varphi$-volume of $M$ is finite \cite{morgan} (see also \cite[Theorem 4.1]{weywyl}) and then, $\lambda_{1}(-\Delta_{\varphi})=0$ \cite[Proposition 3.1]{setti}. It follows from Corollary \ref{restriction2tosolution} that $v$ is constant, as claimed.
\end{proof}

\subsection{Applying the weak Maximum Principle at Infinity}\label{wmpatinf}

Let $M^n$ be a manifold for which there are functions $\varphi,v\in C^{\infty}(M)$ and constants $\lambda\in\mathbb{R}$, $\mu\leq0$ and $m\in(0,+\infty)$ satisfying $(\ref{eiteiS})$. In particular,
\begin{align*}
	Ric_{\varphi}\geq\lambda g.
\end{align*}
It thus follows from \cite[Theorem 4.1]{weywyl} that the volume of the geodesic balls with respect to $e^{-\varphi}dV$ grow in a way to assure the weak Maximum Principle at Infinity to hold on $M$. See \cite[Theorem 9]{pirise2} for more details. Using such a maximum principle and Theorem \ref{gradbound}, we generalize to expanding warped product gradient Ricci solitons a rigidity proved to expanding $m$-quasi-Einstein manifolds \cite[Theorem 5]{barros}.

\begin{theorem}\label{negneg2}
	Let $M^n$ be a complete manifold for which there are functions $\varphi,v\in C^{\infty}(M)$ and constants $\lambda,\ \mu\in\mathbb{R}$ and $m\in(0,+\infty)$ satisfying $(\ref{eiteiS})$ and $(\ref{neweq2})$. Suppose that $\lambda<0$ and $\mu<0$. If $\displaystyle\inf_{M}v>-\infty$, then
	\begin{align*}
		\displaystyle\sup_{M}|\nabla v|^2\leq-m\lambda+2m\mu e^{\frac{2}{m}\inf_{M}v}.
	\end{align*}
	In particular, if
	\begin{align}\label{boundnegat}
		\displaystyle v\geq\frac{m}{2}\ln\left(\frac{\lambda}{2\mu}\right),
	\end{align}
	then $v$ is constant.
\end{theorem}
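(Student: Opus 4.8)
The plan is to exploit the equation $(\ref{gradeq})$ satisfied by $w := |\nabla v|^2$ together with the weak Maximum Principle at Infinity, which is available here because $(\ref{eiteiS})$ gives $Ric_{\varphi}\geq\lambda g$ and hence, by \cite[Theorem 4.1]{weywyl}, the $e^{-\varphi}dV$-volume of geodesic balls grows slowly enough for the Omori--Yau-type principle of \cite{pirise2} to apply to the operator $\Delta_{\varphi}$. First I would record the a priori bounds we already have: Theorem \ref{gradbound} (with $\mu<0\leq0$, $\lambda<0$) gives $\sup_M w\leq -m\lambda<+\infty$, so $w$ is bounded above, and the hypothesis $\inf_M v>-\infty$ means $e^{\frac{2}{m}v}$ is bounded below by a positive constant, namely $e^{\frac{2}{m}\inf_M v}$. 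These two facts are what make the maximum principle at infinity usable on $w$.

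The core step is to apply the weak Maximum Principle at Infinity to $w=|\nabla v|^2$: there is a sequence $x_k\in M$ with $w(x_k)\to \sup_M w$ and $\limsup_k \Delta_{\varphi}w(x_k)\leq 0$ (and $|\nabla w|(x_k)\to 0$, though only the Laplacian bound is needed here). Evaluating $(\ref{gradeq})$ at $x_k$ and discarding the nonnegative term $2|\nabla\nabla v|^2$, we get
\begin{align*}
	\Delta_{\varphi}w(x_k)\geq 2\left(\lambda-2\mu e^{\frac{2}{m}v(x_k)}\right)w(x_k)+\frac{2}{m}w(x_k)^2.
\end{align*}
Since $\mu<0$, the term $-2\mu e^{\frac{2}{m}v(x_k)}$ is positive, and because $e^{\frac{2}{m}v}\geq e^{\frac{2}{m}\inf_M v}$ we have $-2\mu e^{\frac{2}{m}v(x_k)}\geq -2\mu e^{\frac{2}{m}\inf_M v}$; thus the bracket is bounded below by $\lambda-2\mu e^{\frac{2}{m}\inf_M v}$. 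Passing to the limit and using $\limsup_k\Delta_{\varphi}w(x_k)\leq 0$ together with $\frac{2}{m}w(x_k)^2\geq 0$ yields
\begin{align*}
	0\geq 2\left(\lambda-2\mu e^{\frac{2}{m}\inf_M v}\right)\sup_M w,
\end{align*}
which rearranges to $\sup_M |\nabla v|^2\leq -m\lambda+2m\mu e^{\frac{2}{m}\inf_M v}$ once one checks that the right-hand side is nonnegative (if it were negative the inequality would already force $\sup_M w=0$, giving the bound trivially). The "In particular" clause is then immediate: if $(\ref{boundnegat})$ holds, i.e. $v\geq\frac{m}{2}\ln(\lambda/(2\mu))$ so that $e^{\frac{2}{m}\inf_M v}\geq \lambda/(2\mu)$ — note $\lambda/(2\mu)>0$ since both are negative — then $2m\mu e^{\frac{2}{m}\inf_M v}\leq 2m\mu\cdot\frac{\lambda}{2\mu}=m\lambda$, so $\sup_M|\nabla v|^2\leq -m\lambda+m\lambda=0$, forcing $v$ constant.

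The main obstacle, and the only point requiring genuine care, is justifying the direction of the inequality $-2\mu e^{\frac{2}{m}v(x_k)}\geq -2\mu e^{\frac{2}{m}\inf_M v}$ along the maximizing sequence and making sure the sequence $x_k$ need not escape to where $v$ is large — but this is exactly handled by the lower bound $e^{\frac{2}{m}v}\geq e^{\frac{2}{m}\inf_M v}$, which holds pointwise on all of $M$, so no control on the location of $x_k$ is needed. A secondary subtlety is that the weak Maximum Principle at Infinity applies to the $\varphi$-Laplacian and requires $w\in C^2$ bounded above; boundedness comes from Theorem \ref{gradbound} and regularity is automatic since $v\in C^\infty(M)$. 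With those checks in place the argument closes.
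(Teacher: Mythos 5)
Your overall strategy is exactly the paper's: use Theorem \ref{gradbound} to get $\sup_M|\nabla v|^2\leq -m\lambda<+\infty$, apply the weak Maximum Principle at Infinity to $w=|\nabla v|^2$ along a maximizing sequence, and control the coefficient in $(\ref{gradeq})$ via the pointwise bound $-2\mu e^{\frac{2}{m}v}\geq -2\mu e^{\frac{2}{m}\inf_M v}$ (valid since $\mu<0$). That part is all correct and matches the paper.

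However, your final algebraic step contains a genuine error: you discard the term $\frac{2}{m}w(x_k)^2$ before passing to the limit, arriving at
\begin{align*}
0\geq 2\left(\lambda-2\mu e^{\frac{2}{m}\inf_M v}\right)\sup_M w ,
\end{align*}
and claim this ``rearranges'' to $\sup_M w\leq -m\lambda+2m\mu e^{\frac{2}{m}\inf_M v}$. It does not. Writing $A=\lambda-2\mu e^{\frac{2}{m}\inf_M v}$ and $S=\sup_M w$, the inequality $0\geq 2AS$ only tells you that either $S=0$ or $A\leq 0$; the latter is a constraint on the constants $\lambda,\mu,\inf_M v$, not an upper bound on $S$. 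The quadratic term is precisely what produces the estimate: keeping it, the limit gives $0\geq 2AS+\frac{2}{m}S^2$, and dividing by $2S$ when $S>0$ yields $S\leq -mA=-m\lambda+2m\mu e^{\frac{2}{m}\inf_M v}$, which is the paper's argument. So the fix is simply to retain $\frac{2}{m}|\nabla v|^4(x_k)$ in the inequality rather than throwing it away. (A minor secondary remark: your parenthetical claim that a negative right-hand side would force $S=0$ ``giving the bound trivially'' is also off --- if the right-hand side is negative and $S=0$, the stated inequality is false, not trivially true; but this degenerate case only occurs when $v$ is constant and is not where the substance of the theorem lies.) Your deduction of the ``in particular'' clause from the main estimate is correct.
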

\begin{proof}
	We know from Theorem \ref{gradbound} that $\displaystyle\sup_{M}|\nabla v|^2<-m\lambda$. Once the weak Maximum Principle at Infinity holds true in $M$, there is a sequence $(p_{k})$ in $M$ satisfying
	\begin{align*}
		(\Delta_{\varphi}|\nabla v|^2)(p_{k})\leq\frac{1}{k}\ \ \ \ \text{and}\ \ \ \ |\nabla v|^2(p_{k})\geq\sup_{M}{|\nabla v|^2}-\frac{1}{k}.
	\end{align*}
	Suppose that $\displaystyle\inf_{M}{v}>-\infty$. Using $(\ref{gradeq})$ we get
	\begin{align*}
		\frac{1}{k}\geq(\Delta_{\varphi}|\nabla v|^2)(p_{k})\geq2(\lambda-2\mu e^{\frac{2}{m}\inf_{M}v})|\nabla v|^2(p_{k})+\frac{2}{m}|\nabla v|^4(p_{k}).
	\end{align*}
	Taking $k\rightarrow+\infty$, we get
	\begin{align*}
		\sup_{M}{|\nabla v|^2}\leq-m\lambda+2m\mu e^{\frac{2}{m}\inf_{M}v},
	\end{align*}
	as claimed. On the other hand, if we assume $(\ref{boundnegat})$, then $\sup_{M}{|\nabla v|^2}=0$, and then $v$ is constant.	
\end{proof}

Now we apply the weak Minimum Principle at Infinity to equation $(\ref{neweq2})$. The author do not know whether this is already known	 for $m$-quasi-Einstein manifolds.

\begin{theorem}\label{zeronegtv}
	Let $M^n$ be a complete manifold for which there are functions $\varphi,v\in C^{\infty}(M)$ and constants $\lambda,\ \mu\in\mathbb{R}$ and $m\in(0,+\infty)$ satisfying $(\ref{eiteiS})$ and $(\ref{neweq2})$. Suppose that $\displaystyle\inf_{M}{v}>-\infty$. If either,
	\begin{enumerate}
		\item $\lambda=0$ and $\mu>0$, or
		\item $\lambda<0$ and $\mu\geq0$,
	\end{enumerate}
then $v$ is constant.
\end{theorem}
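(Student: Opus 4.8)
The plan is to apply the weak Minimum Principle at Infinity to the equation $(\ref{neweq2})$, in the same spirit as Theorem \ref{negneg2}, but now extracting information from the \emph{minimum} of $v$ rather than from the maximum of $|\nabla v|^2$. Since $Ric_{\varphi}\geq\lambda g$ by $(\ref{eiteiS})$, the volume growth of geodesic balls with respect to $e^{-\varphi}dV$ is controlled (as recalled at the start of this subsection via \cite[Theorem 4.1]{weywyl} and \cite[Theorem 9]{pirise2}), so the weak Minimum Principle at Infinity holds on $M$. Under the hypothesis $\inf_M v>-\infty$, this yields a sequence $(p_k)$ in $M$ with
\begin{align*}
(\Delta_{\varphi}v)(p_k)\geq-\frac{1}{k}\ \ \ \ \text{and}\ \ \ \ v(p_k)\leq\inf_M v+\frac{1}{k}.
\end{align*}

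Next I would plug this into $(\ref{neweq2})$, rewritten as $\Delta_{\varphi}v=m\lambda-m\mu e^{\frac{2}{m}v}$. Evaluating at $p_k$ gives
\begin{align*}
-\frac{1}{k}\leq(\Delta_{\varphi}v)(p_k)=m\lambda-m\mu e^{\frac{2}{m}v(p_k)}.
\end{align*}
In case (2), where $\lambda<0$ and $\mu\geq0$, the right-hand side is $\leq m\lambda<0$ since $-m\mu e^{\frac{2}{m}v(p_k)}\leq0$; letting $k\to\infty$ forces $0\leq m\lambda<0$, a contradiction unless the sequence degenerates, i.e.\ unless $v$ is constant — more precisely, the contradiction shows the Minimum Principle cannot produce such a sequence unless $v$ attains its infimum and is constant. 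In case (1), where $\lambda=0$ and $\mu>0$, we get $-\frac1k\leq-m\mu e^{\frac{2}{m}v(p_k)}\leq-m\mu e^{\frac{2}{m}\inf_M v}<0$; again letting $k\to\infty$ yields $0\leq-m\mu e^{\frac{2}{m}\inf_M v}<0$, a contradiction. In both cases the only way out is that $v$ is constant.

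The one subtlety to address carefully is the precise form of the weak Minimum Principle being invoked: for a function $v$ bounded below, it provides a minimizing sequence along which $\Delta_{\varphi}v$ is asymptotically nonnegative (this is the Minimum Principle, the natural dual of the weak Maximum Principle at Infinity used in Theorem \ref{negneg2}, and it holds under the same volume-growth hypothesis). Once that is in hand the argument is a two-line computation. I expect the only real obstacle is bookkeeping: making sure the Minimum Principle at Infinity (as opposed to the Maximum Principle) is legitimately available under $Ric_{\varphi}\geq\lambda g$ with $\mu\le 0$ — but this is immediate since $\Delta_{\varphi}$ satisfies both principles whenever the $e^{-\varphi}$-volume growth is appropriately controlled, which is exactly what $Ric_{\varphi}\geq\lambda g$ guarantees here. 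After that, the contradiction in each of the two cases follows directly by passing to the limit, and no further estimates or comparison theorems are needed.
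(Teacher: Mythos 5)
Your proposal is correct and follows essentially the same route as the paper: invoke the weak Minimum Principle at Infinity (justified by $Ric_{\varphi}\geq\lambda g$ via the Wei--Wylie volume estimate), extract a minimizing sequence $(p_k)$ with $(\Delta_{\varphi}v)(p_k)\geq-\frac{1}{k}$, substitute into $(\ref{neweq2})$, and pass to the limit to get $0\leq m\lambda-m\mu e^{\frac{2}{m}\inf_M v}<0$ in both sign cases. The only cosmetic difference is that the paper treats the two cases simultaneously under $\lambda\leq0$, $\mu\geq0$, $\lambda^2+\mu^2\neq0$, and it ends, as you do, with a flat contradiction rather than a literal derivation that $v$ is constant.
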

\begin{proof}
	Along the proof we assume that $\lambda\leq0$, $\mu\geq0$ and $\lambda^2+\mu^2\neq0$. Suppose that $\displaystyle\inf_{p\in M}{v(p)}>-\infty$. Once the weak Minimum Principle at Infinity holds true in $M$, there is a sequence $(p_{k})$ in $M$ so that
	\begin{align*}
		(\Delta_{\varphi}v)(p_{k})\geq-\frac{1}{k}\ \ \ \ \text{and}\ \ \ \ v(p_{k})\leq\inf_{M}{v}+\frac{1}{k}.
	\end{align*}
	Using equation $(\ref{neweq2})$ we get,
	\begin{align*}
		-\frac{1}{k}\leq m\lambda-m\mu e^{\frac{2}{m}v(p_{k})}.
	\end{align*}
	As $v(p_{k})$ converges to $\displaystyle\inf_{M}{v}$, we conclude that
	\begin{align*}
		0\leq m \lambda-m\mu e^{\frac{2}{m}\inf_{M}{v}}<0,
	\end{align*}
	what is a contradiction.
\end{proof}
\subsection{Proof of the main theorems}\label{howtoprove}

In order to prove of Theorem \ref{noncompacshrinking}, Theorem \ref{noncompacsteady} and Theorem \ref{noncompacexpanding}, we use the change of dependent coordinates $(\ref{change})$ and apply the theorems of the previous subsections.

\begin{proof}[{\bf Proof of Theorem \ref{noncompacshrinking}}]
	Suppose that $h$ is not constant and consider the change of coordinates of Proposition \ref{gomariprop}. Now apply Theorem \ref{gradbound} to conclude that $\mu>0$. To prove that under \ref{item02} the function $h$ must be constant, apply Theorem \ref{posiposi} to get a contradiction. This finishes the proof.
\end{proof}

\begin{proof}[{\bf Proof of Theorem \ref{noncompacsteady}}]
	Suppose that $h$ is not constant and consider the change of coordinates of Proposition \ref{gomariprop}. Now we use Theorem \ref{gradbound} and Theorem \ref{zeronegtv} to obtain item \ref{item1} and item \ref{item2}, respectively.	
\end{proof}

\begin{proof}[{\bf Proof of Theorem \ref{noncompacexpanding}}]
	Suppose that $h$ is not constant and consider the change of coordinates of Proposition \ref{gomariprop}. Now apply Theorem \ref{gradbound}, Theorem \ref{negneg}, Theorem \ref{negneg2} and Theorem \ref{zeronegtv} to obtain item \ref{negitem1}, item \ref{negitem2}, item \ref{negitem3} and item \ref{negitem4}, respectively.
\end{proof}

%%%%%%%%%%%%%%%%%%%%%%%%%%%%%%%%%%%%%%%%%%%%%%%%%%%%%%%%                PROOF OF THE MAIN THEOREMS
%%%%%%%%%%%%%%%%%%%%%%%%%%%%%%%%%%%%%%%%%%%%%%%%%%%%%%%%%%%%%%

%%%%%%%%%%%%%%%%%%%%%%%%%%%%%%%%%%%%%%%%%%BIBLIOGRAPHY%%%%%%%%%%%%%%%%%%%%%%%%%%%%%%%%%%%%%%%%%%	

\vspace{1cm}

\hspace{-.8cm} Universidade Federal do Pará - UFPA\\
Faculdade de Matem\'{a}tica, 66075-110 Be\-l\'{e}m, Par\'{a} - PA, Brazil.\\
valterborges@ufpa.br

\end{document}